\numberwithin{equation}{section}
\newtheorem{theorem}{Theorem}[section]
\newtheorem{proposition}[theorem]{Proposition}
\newtheorem{definition}[theorem]{Definition}
 \DeclareMathOperator{\Tr}{Tr}
\DeclareMathOperator{\Ad}{Ad}
\DeclareMathOperator{\End}{End} \DeclareMathOperator{\Rea}{Re}
\DeclareMathOperator{\Ima}{Im}
\title [Schr\"odinger model...]{Schr\"odinger model and Stratonovich-Weyl correspondence for Heisenberg motion groups}
\author {Benjamin Cahen}
\address{Universit\'e de Lorraine, Site de Metz, UFR-MIM,
D\'epartement de math\'ematiques,
B\^atiment A,
Ile du Saulcy, CS 50128, F-57045, Metz cedex 01, France.}
\email{benjamin.cahen@univ-lorraine.fr}
\subjclass[2000]{22E45; 22E70; 22E20; 81S10; 81R30.} \keywords{
Stratonovich-Weyl correspondence; Berezin quantization; Berezin transform; Heisenberg motion group; 
reproducing kernel Hilbert space; coherent states; Schr\"odinger representation;
Bargmann-Fock representation; Segal-Bargmann transform.}
\begin{document}

\maketitle

\begin{abstract}
We introduce a Schr\"odinger model for the unitary irreducible representations of a Heisenberg motion group and we show that the usual Weyl quantization then provides a Stratonovich-Weyl correspondence.
\end{abstract}

\vspace{1cm}

\section {Introduction} \label{sec:intro}

There are different ways to extend the usual Weyl correspondence between functions on  ${\mathbb R}^{2n}$ and operators
on $L^2( {\mathbb R}^{n})$ to the general setting of a Lie group acting on a homogeneous space \cite{AE}, \cite{Go}, \cite{CaWQ}, \cite{GazH}. Here we are concerned with Stratonovich-Weyl correspondences. The notion of Stratonovich-Weyl correspondence was introduced in 
\cite{St} and its systematic study began with the work of  J.M. Gracia-Bond\`{i}a, J.C.
V\`{a}rilly and their co-workers (see \cite{GBV}, \cite{FGBV},
\cite{CGBV}, \cite{GB} and also \cite{BM}). The following definition is taken from \cite{GB}, see also \cite{GBV}.

\begin{definition} Let $G$ be a Lie group and
$\pi$ be a unitary representation of $G$ on a Hilbert space $\mathcal
H$. Let $M$ be a homogeneous $G$-space and let $\mu$ be
a $G$-invariant measure on $M$. Then a
Stratonovich-Weyl correspondence for the triple $(G,\pi, M)$ is an
isomorphism ${\mathcal W}$ from a vector space of operators on $\mathcal H$ to
a vector space of functions on $M$ satisfying the following
properties:

 \begin{enumerate}

\item the function ${\mathcal W}(A^{\ast})$ is the complex-conjugate of ${\mathcal W}(A)$;

\item Covariance: we have ${\mathcal W}(\pi (g)\,A\,\pi (g)^{-1})(x)={\mathcal W}(A)(g^{-1}\cdot x)$;

\item Traciality: we have
\begin{equation*}\int_M\,{\mathcal W}(A)(x){\mathcal W}(B)(x)\,d\mu (x)=\Tr(AB).
\end{equation*}

\end{enumerate} \end{definition}

Stratonovich-Weyl
correspondences were constructed for various Lie group representations, see  \cite{CGBV}, \cite{GB}.
In particular, in \cite{CaPad}, Stratonovich-Weyl correspondences for the holomorphic
representations of quasi-Hermitian Lie groups were obtained by
taking the isometric part in the polar decomposition of the Berezin
quantization map, see also \cite{FGBV}, \cite{CaSWC}, \cite{CaSWD}, \cite{AU1}, \cite{AU2} and \cite{CaJac}.

The basic example is the case when $G$ is the $(2n+1)$-dimensional Heisenberg group
acting on ${\mathbb R}^{2n}\cong {\mathbb C}^n$ by translations. 
Each non-degenerate unitary irreducible representation of
$G$ has then two classical realizations: the Schr\"odinger model
on $L^2({\mathbb R}^n)$ and the Bargmann-Fock model on the
Fock space \cite{Fo}, an intertwining operator between these
realizations being the Segal-Bargmann transform \cite{Fo}, \cite{Comb}.
In this context, it is well-known that the usual
Weyl correspondence provides a Stratonovich-Weyl correspondence for
the Schr\"odinger realization \cite{ArnC}, \cite{Wild1}, \cite{Pe}. It is also known that
this Stratonovich-Weyl correspondence is connected by the
Segal-Bargmann transform to the Stratonovich-Weyl correspondence
for the Bargmann-Fock realization which was obtained by polarization
of the Berezin quantization map \cite{Luo1}, \cite{Luo}.
In \cite{CaRiv}, we obtained similar results for the $(2n+2)$-dimensional real diamond group. This group, also called
oscillator group,  is a semidirect product of the Heisenberg group by the real
line. 

The aim of the present paper is to extend the preceding results to the Heisenberg motion groups. An Heisenberg motion group is the semidirect product of the $(2n+1)$-dimensional Heisenberg group $H_n$ by a compact subgroup $K$ of the unitary group $U(n)$. Note that Heisenberg motion groups play an important role in the theory of Gelfand pairs, since the study of a Gelfand pair of the form $(K_0,N)$ where $K_0$ is a compact Lie group acting by automorphisms on a nilpotent Lie group $N$ can be reduced
to that of the form $(K_0,H_n)$, see \cite{BJLR}, \cite{BJR}.

More precisely, we introduce a Schr\"odinger realization for the unitary irreducible representations of a 
Heisenberg motion group and we prove that we obtain a Stratonovich-Weyl correspondence by combining the usual Weyl correspondence and the unitary part
of the Berezin calculus for $K$.

Let us briefly describe our construction. First notice that each Heisenberg motion group is, in particular, a quasi-Hermitian Lie group and  that we can obtain its unitary irreducible representations as holomorphically induced representations on some generalized Fock space by the general method of \cite{Ne}, Chapter XII. Then we can get
Schr\"odinger realizations for these representations by using, as in the case of the Heisenberg group, a
generalized Bargmann-Fock transform. Hence we obtain a Stratonovich-Weyl correspondence for such a Schr\"odinger realization by introducing a generalization of the usual Weyl correspondence.

Note that, in \cite{Mo}, a Schr\"odinger model and a generalized Segal-Bargmann transform for the scalar highest weight representations of an Hermitian Lie group of tube type were introduced and studied. Let us also mentioned
that B. Hall has obtained some generalized Segal-Bargmann transforms in various situations by means of the heat kernel, see \cite{Hall} and references therein. Then one can hope for futher generalizations of our results to quasi-Hermitian Lie groups. 

This paper is organized as follows. In Section \ref{sec:Heisen}, we review some well-known facts about the Fock model
and the Schr\"odinger model of the unitary irreducible representations of an Heisenberg group and about the corresponding
Berezin calculus and Weyl correspondence. In Section \ref{sec:gen}, we introduce the Heisenberg motion groups and,
in Section \ref{sec:Fock} and Section \ref{sec:swcb}, we describe their unitary irreducible representations in the Fock model and the associated
Berezin calculus. We introduce the (generalized) Segal-Bargmann transform and the Schr\"odinger model in Section \ref{sec:Sch}. In Section \ref{sec:SWC}, we show that the usual Weyl correspondence also gives a Stratonovich-Weyl correspondence for the Schr\"odinger model. Moreover, we compare it with the Stratonovich-Weyl correspondence for
the Fock model which is directly obtained by polarization of the Berezin quantization map.

\section {Heisenberg groups} \label{sec:Heisen}

In this section, we review some well-known results about the
the Schr\"odinger model and the Fock
model of the unitary irreducible (non-degenerated)
representations of the Heisenberg group. We follow the presentation
of \cite{CaRiv} in a large extend.

Let $G_0$ be the Heisenberg group of dimension $2n+1$ and
${\mathfrak g}_0$ be the Lie algebra of $G_0$. Let
$\{X_1,\ldots,X_n,Y_1,\ldots,Y_n,\tilde {Z}\}$ be a basis of
${\mathfrak g}_0$ in which the only non trivial brackets are
$[X_k\,,\,Y_k]=\tilde {Z}$, $ 1\leq k\leq n$ and let
$\{X_1^{\ast},\ldots,X_n^{\ast},Y_1^{\ast},\ldots,Y_n^{\ast},{\tilde
Z}^{\ast}\}$ be the corresponding dual basis of ${\mathfrak
g}_0^{\ast}$.

For $a=(a_1,a_2,\ldots,a_n)\in {\mathbb R}^n$, $b=(b_1,b_2,\ldots,b_n)\in
{\mathbb R}^n$ and $c\in {\mathbb R}$, we denote by $[a,b,c]$ the
element $\exp_{G_0}(\sum_{k=1}^na_kX_k+\sum_{k=1}^nb_kY_k+c{\tilde
Z})$ of $G_0$. Similarly, for $\alpha=(\alpha_1,\alpha_2,\ldots,\alpha_n)\in {\mathbb R}^n$, $\beta=(\beta_1,\beta_2,\ldots,\beta_n)\in
{\mathbb R}^n$ and $\gamma\in {\mathbb R}$, we denote by $(\alpha,\beta,\gamma)$ the element
$\sum_{k=1}^n{\alpha}_kX_k^{\ast}+\sum_{k=1}^n{\beta}_kY_k^{\ast}+
{\gamma}{\tilde Z}^{\ast}$ of ${\mathfrak
g}_0^{\ast}$. The coadjoint action of $G_0$ is then given by
\begin{equation*}\Ad ^{\ast}([a,b,c])\, (\alpha,\beta,\gamma)=(\alpha+\gamma\beta, \beta-\gamma\alpha,\gamma). \end{equation*}

Now we fix a real number $\lambda>0$ and denote by ${\mathcal O}_{\lambda}$ the
orbit of the element $\lambda {\tilde Z}^{\ast}$ of
${\mathfrak g}_0^{\ast}$ under the coadjoint action of $G_0$ (the
case $\lambda<0$ can be treated similarly). By the Stone-von Neumann
theorem, there exists a unique (up to unitary equivalence) unitary
irreducible representation of $G_0$ whose restriction to the center
of $G_0$ is the character $[0,0,c]\rightarrow e^{i\lambda c}$
\cite{AKo}, \cite{Fo}. Note that this representation is associated with the
coadjoint orbit ${\mathcal O}_{\lambda}$ by the Kirillov-Kostant
method of orbits \cite{Kir}, \cite{Ko}. More precisely, if we choose the real
polarization at $\lambda {\tilde Z}^{\ast}$ to be the space spanned by the
elements $Y_k$ for $ 1\leq k\leq n$ and $\tilde {Z}$ then we obtain
the Schr\"odinger representation $\sigma_0$ realized on
$L^2({\mathbb R}^n)$ as
\begin{equation*}(
\sigma_0([a,b,c])f)(x)
=e^{i\lambda(c-bx+\frac{1}{2}ab)}f(x-a), \end{equation*} see
\cite{Fo} for instance. Here we denote
$xy:=\sum_{k=1}^nx_ky_k$ for $x=(x_1,x_2,\ldots,x_n)$ and
$y=(y_1,y_2,\ldots,y_n)$ in ${\mathbb R}^n$. 

The differential of
$\sigma_0$ is then given by
\begin{equation*}d\sigma_0(X_k)f(x)=-\partial_k f(x),\,\,
d\sigma_0(Y_k)f(x)=-i\lambda
x_kf(x),\,\,d\sigma_0(\tilde {Z})f(x)=i\lambda
f(x)\end{equation*} where $k=1,2,\ldots, n$. 

On the other hand, if we consider the complex polarization at $\lambda {\tilde Z}^{\ast}$ to be the space spanned
by the elements $X_k+iY_k$ for $ 1\leq k\leq n$ and $\tilde {Z}$
then the method of orbits leads to the Bargmann-Fock representation $\pi_0$
defined as follows \cite{CaJAM}.

Let ${\mathcal F}_{0}$ be the Hilbert space of holomorphic functions $F$
on ${\mathbb C}^n$ such that \begin{equation*}\Vert F\Vert^2_{{\mathcal F}_{0}}
:=\int_{{\mathbb C}^n} \vert F(z)\vert^2\, e^{-\vert
z\vert^2/2\lambda}\,d\mu_{\lambda} (z) <+\infty\end{equation*} where
 $d\mu_{\lambda}(z):=(2\pi
\lambda)^{-n}\,dx\,dy$. Here $z=x+iy$ with $x$ and $y$ in ${\mathbb
R}^n$.

Let us consider the action of $G_0$ on ${\mathbb C}^n$
defined by $g\cdot z:=z+\lambda (b-ia)$ for $g=[a,b,c]\in G_0$ and
$z\in {\mathbb C}^n$. Then $\pi_{0}$ is the representation of
$G_0$ on ${\mathcal F}_{0}$ given by
\begin{equation*}\pi_0 (g)\,F(z)=\alpha
(g^{-1},z)\,F(g^{-1}\cdot z) \end{equation*} where the map $\alpha$
is defined by
\begin{equation*}\alpha (g,z):=\exp
\bigl(-ic\lambda+(1/4) (b+ai)(-2z+\lambda(-b+ai))\bigr)
\end{equation*} for $g=[a,b,c]\in G_0$ and $z\in {\mathbb C}^n$.

 The differential of $\pi_{0}$ is then given by
\begin{equation*}\left\{\begin{aligned}
d\pi_{0}(X_k)F(z)=&\frac{1}{2}iz_kF(z)+\lambda i\frac
{\partial F}{\partial z_k}\\
d\pi_{0}(Y_k)F(z)=&\frac{1}{2}z_kF(z)-\lambda \frac {\partial
F}{\partial z_k}\\
d\pi_{0}(\tilde {Z})F(z)=&i\lambda F(z).
\end{aligned}\right.\end{equation*}

As in \cite{Hall1}, Section 6 or
\cite{Comb}, Section 1.3, we can verify by using the previous formulas for $d\pi_0$ and $d\sigma_0$ that the Segal-Bargmann
transform $B_0: L^2({\mathbb R}^n)\rightarrow {\mathcal F}_{0}$
defined by
\begin{equation*}B_0(f)(z)=(\lambda/ \pi)^{n/4}\,\int_{{\mathbb R}^n}\,e^{
(1/4\lambda)z^2+ixz-(\lambda/2)x^2}\,f(x)\,dx\end{equation*} is a
(unitary) intertwining operator between $\sigma_{0}$ and
$\pi_{0}$. The inverse Segal-Bargmann transform
$B_0^{-1}=B_0^{\ast}$ is then given by
\begin{equation*}B_0^{-1}(F)(x)=(\lambda/ \pi)^{n/4}\,\int_{{\mathbb C}^n}\,e^{
(1/4\lambda){\bar z}^2-ix{\bar z}-(\lambda/2)x^2}\,F(z)\,e^{-\vert
z\vert^2/2\lambda}\,d\mu_{\lambda} (z).\end{equation*}

For $z\in {\mathbb C}^n$, consider the coherent state
$e_z(w)=\exp ({\bar z}w/2\lambda)$. Then we have the reproducing property
$F(z)=\langle F,e_z\rangle_{{\mathcal F}_{0}}$ for each $F\in {\mathcal
F}_{0}$ where $\langle\cdot ,\cdot\rangle_{{\mathcal
F}_{0}}$ denotes the scalar product on ${\mathcal F}_{0}$.

Now, we introduce the Berezin quantization map and we
review some of its properties. Let ${\mathcal C}_0$ be the space of all operators (not necessarily
bounded) $A$ on ${\mathcal F}_{0}$ whose domain contains $e_z$
for each $z\in {\mathbb C}^n$. Then the Berezin symbol of $A\in {\mathcal C}_0$ is the
function $S^0(A)$ defined on ${\mathbb C}^n$ by
\begin{equation*}S^0(A)(z):=\frac {\langle A\,e_z\,,\,e_z\rangle_{\mathcal F_0}}
 {\langle e_z\,,\,e
_z\rangle_{\mathcal F_0}}. \end{equation*}

We have the following result, see for instance \cite{CaRiv}.

\begin{proposition} \label{proprBer}\begin{enumerate}
\item Each $A\in {\mathcal C}_0$ is determined by $S^0(A)$;

\item For each $A\in {\mathcal C}_0$ and each $z\in {\mathbb C}^n$, we
have $S^0(A^{\ast})(z)=\overline {S^0(A)(z)}$;
 
\item For each $z\in {\mathbb C}^n$, we have $S^0(I_{{\mathcal F}_{0}})(z)=1$.
Here $I_{{\mathcal F}_{0}}$ denotes the identity operator of ${\mathcal F}_0$;

\item For each $A\in {\mathcal C}_0$, $g\in G_0$ and $z\in {\mathbb C}^n$, we have
$\pi_0(g)^{-1}A\pi_0(g)\in {\mathcal C}_0$ and
\begin{equation*}S^0(A)(g\cdot z)=S^0(\pi_0(g)^{-1}A\pi_0(g))(z);\end{equation*}

\item The map $S^0$ is a bounded operator from
${\mathcal L}_2({\mathcal F}_{0})$ (endowed with the
Hilbert-Schmidt norm) to $L^2({\mathbb C}^n,\mu_{\lambda})$  which
is one-to-one and has dense range.

\end{enumerate} \end{proposition}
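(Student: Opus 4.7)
The plan is to dispatch the algebraic properties (2), (3) and (4) by direct computation from the definition, handle (1) through the overcompleteness of the coherent state family together with analyticity, and then reserve the bulk of the work for the functional-analytic statement (5).

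Parts (2) and (3) are essentially tautological: writing $\langle A^{\ast}e_z,e_z\rangle=\overline{\langle e_z,A^{\ast}e_z\rangle}=\overline{\langle Ae_z,e_z\rangle}$ gives (2), while (3) follows because the numerator and denominator of $S^0(I_{\mathcal F_0})(z)$ coincide. For (4), the standard mechanism for holomorphic representations is that $\pi_0(g)$ sends coherent states to coherent states up to a scalar factor, i.e. there exists $c(g,z)\in\mathbb{C}^{\ast}$ with $\pi_0(g)\,e_z=c(g,z)\,e_{g\cdot z}$. I would verify this directly from the explicit formulas for $\pi_0$ and $\alpha$ together with the reproducing property $F(z)=\langle F,e_z\rangle$, by checking that both sides define the same continuous linear functional on $\mathcal{F}_0$. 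Using unitarity of $\pi_0(g)$ then gives $|c(g,z)|^{2}\|e_{g\cdot z}\|^{2}=\|e_z\|^{2}$, and a short computation yields
\begin{equation*}
S^0(\pi_0(g)^{-1}A\pi_0(g))(z)=\frac{\langle A\pi_0(g)e_z,\pi_0(g)e_z\rangle}{\|e_z\|^{2}}=\frac{\langle Ae_{g\cdot z},e_{g\cdot z}\rangle}{\|e_{g\cdot z}\|^{2}}=S^0(A)(g\cdot z).
\end{equation*}

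For (1), I would consider the off-diagonal form $\widetilde{S}^0(A)(z,w):=\langle Ae_z,e_w\rangle$, which is antiholomorphic in $z$ and holomorphic in $w$. Such a function is determined by its restriction to the diagonal $w=z$ (a standard polarization/analytic-continuation argument), so $\widetilde{S}^0(A)$ can be reconstructed from $S^0(A)$ up to the scalar $\|e_z\|^{2}$. Since the reproducing property implies that the family $\{e_z:z\in\mathbb{C}^n\}$ is total in $\mathcal{F}_0$ (any $F$ orthogonal to every $e_z$ satisfies $F(z)=0$), the sesquilinear form $\langle A\,\cdot\,,\cdot\rangle$ is determined by its values on coherent states, and hence so is $A$.

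The main effort is part (5). Using the orthonormal basis of normalized monomials $\phi_\alpha(z)=z^\alpha/\sqrt{(2\lambda)^{|\alpha|}\alpha!}$ of $\mathcal{F}_0$, I would compute the Berezin symbols of the rank-one operators $|\phi_\alpha\rangle\langle\phi_\beta|$ as explicit monomials in $z,\bar z$ and verify, via Gaussian moment integrals against $e^{-|z|^{2}/2\lambda}\,d\mu_\lambda(z)$, that after renormalization these symbols form an orthogonal family in $L^2(\mathbb{C}^n,\mu_\lambda)$ with coefficients controlled uniformly by the Hilbert--Schmidt norm. Expanding any $A\in\mathcal{L}_2(\mathcal{F}_0)$ in this basis and applying Parseval then gives the boundedness of $S^0$, and injectivity is inherited from (1). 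For density of the range, I would argue that its orthogonal complement is trivial: a function $f\in L^2(\mathbb{C}^n,\mu_\lambda)$ orthogonal to every $S^0(|\phi_\alpha\rangle\langle\phi_\beta|)$ has all mixed Gaussian moments vanishing, which forces $f=0$. The main obstacle is precisely this last totality argument, since one must carefully justify the passage from vanishing moments against the Gaussian weight to vanishing of $f$ in $L^2(\mathbb{C}^n,\mu_\lambda)$, typically by recognizing the moment family as a complete system analogous to Hermite functions.
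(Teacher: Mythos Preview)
Your arguments for (1)--(4) are correct and coincide with what the paper does: the paper simply refers to \cite{Be1} and \cite{CGR} for (1) and (2), and for (4) invokes precisely the coherent-state transformation law $\pi_0(g)\,e_z=\overline{\alpha(g,z)}\,e_{g\cdot z}$ (citing \cite{CaPad}) that you propose to verify and use.

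For (5), however, your boundedness argument contains a genuine error. The Berezin symbols of the elementary operators are
\[
S^0\bigl(|\phi_\alpha\rangle\langle\phi_\beta|\bigr)(z)=e^{-|z|^{2}/2\lambda}\,\phi_\alpha(z)\,\overline{\phi_\beta(z)},
\]
and these are \emph{not} an orthogonal family in $L^2({\mathbb C}^n,\mu_\lambda)$. Their pairwise inner product is a Gaussian integral of $z^{\alpha+\beta'}\bar z^{\alpha'+\beta}$ against $e^{-|z|^{2}/\lambda}$, which vanishes only when $\alpha-\beta\neq\alpha'-\beta'$; within a fixed difference class the symbols overlap (already in one variable, $S^0(|\phi_0\rangle\langle\phi_0|)=e^{-|z|^{2}/2\lambda}$ and $S^0(|\phi_1\rangle\langle\phi_1|)=(2\lambda)^{-1}|z|^{2}e^{-|z|^{2}/2\lambda}$ have strictly positive inner product). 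Hence the Parseval step fails and boundedness of $S^0$ is not established. The paper sidesteps this by citing \cite{UU}, Proposition~1.19. If you want a self-contained route, the cleanest fix is not to aim at orthogonality but to identify $S^0(S^0)^{\ast}={\mathcal B}^0$ with convolution by the probability Gaussian $e^{-|\,\cdot\,|^{2}/2\lambda}$ (the formula recorded just after the proposition), whose $L^2$-operator norm is $1$; this yields $\|S^0\|\le 1$ at once. Your density argument, reducing to completeness of polynomials times a Gaussian in $L^2({\mathbb R}^{2n})$ via Hermite functions, is correct.
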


\begin{proof} For (1) and (2), see \cite{Be1} and \cite{CGR}. Note that (4) follows from the following property:
 For each $g\in G_0$ and each
$z\in {\mathbb C}^n$, we have $\pi_{0}(g)e_z=\overline {\alpha
(g,z)}e_{g\cdot z}$, see \cite{CaPad}. Finally, (5) is a particular case of \cite{UU}, Proposition 1.19.
\end{proof}

Recall that the Berezin transform is then the operator ${\mathcal B}^0$
on $L^2({\mathbb C}^n,\mu_{\lambda})$ defined by ${\mathcal B}^0=S^0(S^0)^{\ast}$. Thus
we have the integral formula
\begin{equation*}{\mathcal B}^0(F)(z)=\int_{{\mathbb C}^n}\,F(w)
\,e^{ \vert z-w\vert^2/2\lambda}\,d\mu_{\lambda}(w),\end{equation*}
see \cite{Be1}, \cite{Be2}, \cite{UU}, \cite{OZ} for instance.
Recall also that we have ${\mathcal B}^0=\exp (\lambda\Delta/2)$ where
$\Delta=4\sum_{k=1}^n\partial^2/\partial z_k\partial {\bar z}_k$,
see \cite{UU}, \cite{Luo}. 

Note that Berezin transforms have been studied, in the general setting, by many authors,
see in particular \cite{UU}, \cite{No}, \cite{DOZ}, \cite{OZ} and \cite{Z}.

Note also that $S^0$ allows us to connect $\pi_0$ to ${\mathcal O}_{\lambda}$
as shown by the following proposition. Here we denote by ${\mathfrak
g}_0^c$ the complexification of ${\mathfrak g}_0$.

\begin{proposition} \cite{CaRiv} Let $\Phi_{\lambda}$ be the map defined by
\begin{equation*}\Phi_{\lambda} (z):=\sum_{k=1}^n(\Rea z_k
X_k^{\ast}+\Ima z_kY_k^{\ast})+\lambda{\tilde
Z}^{\ast}.\end{equation*} \noindent Then
\begin{enumerate} \item For each $X\in {\mathfrak g}_0^c$ and each
$z\in {\mathbb C}^n$, we have
\begin{equation*}S^0(d\pi_{0}(X))(z)=i \langle \Phi_{\lambda}
(z),X\rangle.
 \end{equation*}

\item For each $g\in G_0$ and each $z\in {\mathbb C}^n$, we have
$ \Phi_{\lambda}(g\cdot z)=\Ad^{\ast}(g)\,\Phi_{\lambda}(z)$.

\item The map $\Phi_{\lambda}$ is a diffeomorphism from ${\mathbb C}^n$ onto
${\mathcal O}_{\lambda}$. \end{enumerate} \end{proposition}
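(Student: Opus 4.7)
The plan is to prove the three parts in order, each by a direct verification relying on tools already at hand: the explicit formulas for $d\pi_0$ on the real basis of $\mathfrak{g}_0$, the reproducing property $\langle F,e_z\rangle_{{\mathcal F}_0}=F(z)$, and, for part (2), the covariance property (4) of Proposition \ref{proprBer}.

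For (1), since both sides are complex linear in $X$, it is enough to check the identity on $X_k$, $Y_k$, and $\tilde Z$. Fix $z\in\mathbb{C}^n$; since $e_z(w)=\exp(\bar z w/2\lambda)$ is holomorphic in $w$ with $\partial e_z/\partial w_k=(\bar z_k/2\lambda)\,e_z$, the formulas for $d\pi_0$ recalled above yield closed expressions such as
\begin{equation*}
d\pi_0(X_k)\,e_z(w)=\tfrac{i}{2}(w_k+\bar z_k)\,e_z(w),\qquad d\pi_0(Y_k)\,e_z(w)=\tfrac{1}{2}(w_k-\bar z_k)\,e_z(w),
\end{equation*}
together with $d\pi_0(\tilde Z)e_z=i\lambda\,e_z$. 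Applying the reproducing property to the holomorphic function $w\mapsto w_k e_z(w)$ gives $\langle w_k e_z,e_z\rangle_{{\mathcal F}_0}=z_k\,\langle e_z,e_z\rangle_{{\mathcal F}_0}$. Dividing numerator by denominator I would read off $S^0(d\pi_0(X_k))(z)=i\,\Rea z_k$, $S^0(d\pi_0(Y_k))(z)=i\,\Ima z_k$ and $S^0(d\pi_0(\tilde Z))(z)=i\lambda$, which matches $i\langle\Phi_\lambda(z),\,\cdot\,\rangle$ on each of the three basis vectors.

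For (2), I would avoid any further coherent-state computation and instead invoke Proposition \ref{proprBer}(4). Taking $A=d\pi_0(X)$ and using $\pi_0(g)^{-1}d\pi_0(X)\pi_0(g)=d\pi_0(\Ad(g^{-1})X)$, that covariance becomes $S^0(d\pi_0(X))(g\cdot z)=S^0(d\pi_0(\Ad(g^{-1})X))(z)$; inserting (1) on both sides gives
\begin{equation*}
i\langle\Phi_\lambda(g\cdot z),X\rangle=i\langle\Phi_\lambda(z),\Ad(g^{-1})X\rangle=i\langle\Ad^{\ast}(g)\Phi_\lambda(z),X\rangle
\end{equation*}
for every $X\in\mathfrak{g}_0^c$, whence the desired equivariance.

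For (3), the map $\Phi_\lambda$ is plainly a real-analytic diffeomorphism from $\mathbb{C}^n\simeq\mathbb{R}^{2n}$ onto the affine plane $\Sigma:=\{(\alpha,\beta,\lambda):\alpha,\beta\in\mathbb{R}^n\}\subset\mathfrak{g}_0^{\ast}$. By (2), $\Sigma$ is stable under $\Ad^{\ast}(G_0)$ and contains $\lambda\tilde Z^{\ast}=\Phi_\lambda(0)$, so $\mathcal{O}_\lambda\subset\Sigma$; conversely, the explicit coadjoint formula yields $\Ad^{\ast}([a,b,0])(\lambda\tilde Z^{\ast})=(\lambda b,-\lambda a,\lambda)$, which already sweeps out all of $\Sigma$ as $(a,b)$ varies. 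Hence $\Sigma=\mathcal{O}_\lambda$ and $\Phi_\lambda$ is a diffeomorphism onto this orbit. I do not foresee any real obstacle: the only slightly delicate step is the bookkeeping of signs and factors of $\lambda$ in (1), after which (2) and (3) follow essentially for free.
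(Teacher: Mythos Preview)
Your argument is correct in all three parts: the direct computation of $S^0(d\pi_0(\cdot))$ on the basis vectors via the reproducing property, the deduction of (2) from Proposition~\ref{proprBer}(4) together with (1), and the identification of the affine hyperplane $\Sigma$ with the orbit $\mathcal{O}_\lambda$ using the explicit coadjoint formula.

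Note, however, that the paper does not give its own proof of this proposition: it is stated with a citation to \cite{CaRiv} and used as background material. So there is no in-paper argument to compare against. Your self-contained verification is exactly the kind of computation that underlies the cited result, and nothing is missing.
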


 Now we aim to transfer $S^0$ to operators on $L^2({\mathbb R}^n)$. To this goal, we define  $S^1(A):=S^0(B_0AB_0^{-1})$
for $A$ operator on $L^2({\mathbb R}^n)$. Of course, the properties of $S^0$ give rise to similar properties of
$S^1$. In particular, $S^1$ is a bounded operator from ${\mathcal
L}_2(L^2({\mathbb R}^n))$ to $L^2({\mathbb C}^n,\mu_{\lambda})$
and $S^1$ is $G_0$-covariant with respect to
$\sigma_{0}$.

Moreover, denoting by $I_{B_0}$ the (unitary) map from ${\mathcal L}_2(L^2({\mathbb
R}^n))$ onto ${\mathcal L}_2({\mathcal F}_{0})$ defined by
$I_{B_0}(A)=B_0AB_0^{-1}$, we have $S^1=S^0I_{B_0}$ then
\begin{equation*}S^1(S^1)^{\ast}=(S^0I_{B_0})(S^0I_{B_0})^{\ast}=S^0I_{B_0}I_{B_0}^{\ast}(S^0)^{\ast}=S^0(S^0)^{\ast}=
{\mathcal B}^0.\end{equation*} This shows that the Berezin transform
corresponding to $S^1$ is the same as the Berezin transform
corresponding to $S^0$. 
Then we can write the polar decompositions
of $S^0$ and $S^1$ as $S^0=({\mathcal B}^0)^{1/2}U^0$ and
$S^1=({\mathcal B}^0)^{1/2}U^1$ where the maps $U^0:{\mathcal L}_2({\mathcal
F}_{0})\rightarrow L^2({\mathbb C}^n,\mu_{\lambda})$ and
$U^1:{\mathcal L}_2(L^2({\mathbb R}^n))\rightarrow L^2({\mathbb
C}^n,\mu_{\lambda})$ are unitary. 

Moreover,
as in the proof of \cite{CaSWD}, Proposition 3.1, we can verify that $U^0$
is a Stratonovich-Weyl correspondence for
$(G_0,\pi_{0},{\mathbb C}^n)$ and that $U^1$ is a
Stratonovich-Weyl correspondence for $(G_0,\sigma_{0},{\mathbb
C}^n)$. Note that  $G_0$-covariance of $U^0$ and
$U^1$ immediately follows from $G_0$-covariance of $S^0$ and
$S^1$. Note also that we have $U^1=U ^0I_{B_0}$.

Now, we show how to use the usual Weyl correspondence in order to get another
Stratonovich-Weyl correspondence for $\sigma_{0}$.
The Weyl correspondence on ${\mathbb R}^{2n}$ is defined as follows.
For each $f$ in the Schwartz space ${\mathcal S}({\mathbb
R}^{2n})$, let  $ W_0(f)$ be the operator on  $L^2({\mathbb R}^{n})$ defined by \begin{equation*}W_0(f)\phi
(p)={(2\pi)}^{-n}\,\int_{{\mathbb R}^{2n}}\, e^{isq} \,f( p+(1/2)s,
q)\,\phi (p+s)\,ds\,dq.
\end{equation*}

The Weyl calculus can be extended to much larger
classes of symbols (see for instance \cite{Ho}). In particular, if $ f(p,q)=u(p)q^{\alpha}$ where
$u\in C^{\infty}({\mathbb R}^n)$ then we have,  see \cite{Vo},
\begin{equation*}W_0(f)\phi (p)=\left(i\frac {\partial }{ \partial
s}\right)^{\alpha} \left( u(p+(1/2)s)\,\phi (p+s) \right) \Bigl
\vert _{s=0}.\end{equation*}  

From this, we can deduce the following proposition. Consider the action of $G_0$ on ${\mathbb R}^{2n}$
given by $g\cdot (p,q):=(p+a,q+\lambda b)$ where $g=[a,b,c]$. 

\begin{proposition} \cite{CaRiv} Let $\Psi_{\lambda}$ be the map defined by
\begin{equation*}\Psi_{\lambda}
(p,q):=\sum_{k=1}^n(q_k X_k^{\ast}-\lambda
p_kY_k^{\ast})+\lambda{\tilde Z}^{\ast}.\end{equation*} \noindent
Then \begin{enumerate} \item For each $X\in {\mathfrak g}_0^c$ and
each $(p,q)\in {\mathbb R}^{2n}$, we have
\begin{equation*}W_0^{-1}(d\sigma_{0}(X))(p,q)=i \langle \Psi_{\lambda} (p,q),
X\rangle.\end{equation*}

\item For each $g\in G_0$ and each $(p,q)\in {\mathbb R}^{2n}$, we have
$ \Psi_{\lambda}(g\cdot (p,q))=\Ad^{\ast}(g)\,\Psi_{\lambda}(p,q)$.

\item The map $\Psi_{\lambda}$ is a diffeomorphism from ${\mathbb R}^{2n}$ onto
${\mathcal O}_{\lambda}$.

\item For
each $(p,q)\in {\mathbb R}^{2n}$, we have  $\Phi_{\lambda}(q-\lambda pi)=\Psi_{\lambda}(p,q)$. 
\end{enumerate} \end{proposition}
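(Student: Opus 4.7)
The plan is to verify the four parts by direct computation from the explicit formulas already in hand: the action of $d\sigma_0$ on the basis $\{X_k,Y_k,\tilde Z\}$, the coadjoint action of $G_0$, and the Weyl calculus on polynomial symbols.

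For (1), I would check the identity on the real basis and then extend $\mathbb{C}$-linearly, since both $d\sigma_0$ and $\langle\Psi_\lambda(p,q),\cdot\rangle$ are complex-linear in $X$. Applying the polynomial-symbol formula $W_0(u(p)q^\alpha)\phi(p)=(i\partial/\partial s)^\alpha\bigl(u(p+s/2)\phi(p+s)\bigr)\bigl\vert_{s=0}$ with $u=1$ and $\alpha=e_k$ gives $W_0(q_k)=i\partial_k$, so $W_0^{-1}(d\sigma_0(X_k))(p,q)=iq_k$. Since symbols depending only on $p$ produce ordinary multiplication operators, $W_0^{-1}(d\sigma_0(Y_k))(p,q)=-i\lambda p_k$ and $W_0^{-1}(d\sigma_0(\tilde Z))(p,q)=i\lambda$. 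Reading off the coefficients of $\Psi_\lambda$ matches each of these three with $i\langle\Psi_\lambda(p,q),\cdot\rangle$.

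For (2) and (3), working in the $(\alpha,\beta,\gamma)$-coordinates on $\mathfrak{g}_0^{\ast}$, the point $\Psi_\lambda(p,q)$ reads $(q,-\lambda p,\lambda)$. A direct application of the coadjoint formula to this point, together with $g\cdot(p,q)=(p+a,q+\lambda b)$, yields (2); the central parameter $c$ drops out, as it must since the center acts trivially on coadjoint orbits. For (3), the image of $(0,0,\lambda)=\lambda\tilde Z^{\ast}$ under the coadjoint action is the affine hyperplane $\mathcal{O}_\lambda=\{(\alpha,\beta,\lambda):\alpha,\beta\in\mathbb{R}^n\}$, and $(p,q)\mapsto(q,-\lambda p,\lambda)$ is a smooth bijection onto it with smooth inverse $(\alpha,\beta,\lambda)\mapsto(-\beta/\lambda,\alpha)$, hence a diffeomorphism.

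Part (4) is immediate from the definitions: $\Rea(q-\lambda p i)=q$ and $\Ima(q-\lambda p i)=-\lambda p$ componentwise, so $\Phi_\lambda(q-\lambda p i)=\sum_k(q_k X_k^{\ast}-\lambda p_k Y_k^{\ast})+\lambda\tilde Z^{\ast}=\Psi_\lambda(p,q)$. The only mild obstacle throughout is sign and $\lambda$ bookkeeping; the Weyl-calculus step in (1) is where one must be most careful, keeping track of the factor $i$ coming from $(i\partial/\partial s)^\alpha$ and the sign convention in the kernel $e^{isq}$. Once those are pinned down, the remaining items reduce to reading off formulas.
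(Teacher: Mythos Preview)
Your proof is correct. The paper itself does not prove this proposition but merely cites \cite{CaRiv}; your direct verification on the basis $\{X_k,Y_k,\tilde Z\}$ via the polynomial-symbol formula for $W_0$, followed by the straightforward coadjoint and coordinate checks for (2)--(4), is exactly the natural argument and all the bookkeeping (signs, factors of $i$ and $\lambda$) is handled correctly.
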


Assume that ${\mathbb R}^{2n}$ is equipped with the $G_0$-invariant measure $\tilde{\mu}:=(2\pi)^{-n}dpdq$.
Then one has the following result.
\begin{proposition} \cite{Fo}, \cite{CaRiv} The map $W_0^{-1}$ is a Stratonovich-Weyl correspondence
for $(G_0, \sigma_{0},{\mathbb R}^{2n})$.
\end{proposition}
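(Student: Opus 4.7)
The plan is to check the three axioms of Definition~1.1 directly for $\mathcal{W}=W_0^{-1}$, $M=\mathbb{R}^{2n}$, with measure $\tilde\mu$. Properties (1) and (3) are classical facts about the Weyl calculus that I would quote and adapt; covariance (2) is the substantive point and must be verified against the explicit Schr\"odinger formulas recalled in this section.

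Property (1), namely $W_0(\bar f)=W_0(f)^{\ast}$, I would read off directly from the oscillatory integral by complex-conjugating the integrand and substituting $s\mapsto -s$. For property (3) I would invoke the Plancherel identity for the Weyl transform, according to which $f\mapsto W_0(f)$ extends to a unitary isomorphism $L^2(\mathbb{R}^{2n},\tilde\mu)\to\mathcal{L}_2(L^2(\mathbb{R}^n))$ satisfying
\begin{equation*}
\Tr\bigl(W_0(f)W_0(g)^{\ast}\bigr)=\int_{\mathbb{R}^{2n}}f(x)\overline{g(x)}\,d\tilde\mu(x).
\end{equation*}
Combining this with (1) and writing $A=W_0(f)$, $B=W_0(\bar g)$ yields the traciality identity in the form required by the definition.

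The core of the proof is covariance. Using the decomposition $[a,b,c]=[a,0,0]\cdot[0,b,0]\cdot[0,0,c']$ in $G_0$, together with the multiplicativity of both $g\mapsto\sigma_0(g)(\cdot)\sigma_0(g)^{-1}$ and $f\mapsto f\circ g^{-1}$, it suffices to verify (2) on the three one-parameter subgroups. The center acts trivially on $\mathbb{R}^{2n}$ and $\sigma_0([0,0,c])=e^{i\lambda c}I$ is scalar, so that case is automatic. For $[a,0,0]$, substituting $\sigma_0([a,0,0])\phi(x)=\phi(x-a)$ into $(\sigma_0(g)W_0(f)\sigma_0(g)^{-1})\phi(p)$ produces, with no change of variable, the operator $W_0\bigl(f(p-a,q)\bigr)\phi$. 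For $[0,b,0]$, the multiplication operator $e^{-i\lambda bx}$ and its inverse on the right combine to turn the exponent $e^{isq}$ into $e^{is(q+\lambda b)}$; the substitution $q'=q+\lambda b$ then yields $W_0\bigl(f(p,q-\lambda b)\bigr)\phi$. Since $g^{-1}\cdot(p,q)=(p-a,q-\lambda b)$, both outputs coincide with $W_0(f\circ g^{-1})$, and covariance follows on all of $G_0$. The main obstacle is simply bookkeeping the phase $e^{i\lambda(c-bx+ab/2)}$ cleanly for a general $g$; treating each one-parameter subgroup separately sidesteps any delicate interaction between this phase and the oscillatory kernel of the Weyl integral.
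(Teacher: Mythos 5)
Your proposal is correct and follows essentially the same route as the sources the paper cites for this statement ([Fo], [CaRiv]), namely a direct verification of the three axioms: self-adjointness and traciality from the standard Weyl--Plancherel facts (the normalization $\tilde\mu=(2\pi)^{-n}\,dp\,dq$ is exactly what makes $W_0$ unitary onto the Hilbert--Schmidt class), and covariance checked on generators using the multiplicativity of both sides in $g$. Your reduction to the three one-parameter subgroups via $[a,b,c]=[a,0,0]\cdot[0,b,0]\cdot[0,0,c-\tfrac12 ab]$ is a clean and valid way to organize the phase bookkeeping, and the two substitution computations you describe do yield $W_0(f\circ g^{-1})$ as required.
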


The following proposition asserts that if we identify ${\mathbb
R}^{2n}$ with ${\mathbb C}^n$ by the map $j:(p,q)\rightarrow q-\lambda pi$
then the unitary part in the polar decomposition of $S^1$ coincides
with the inverse of the Weyl transform, see \cite{Luo} and \cite{OZ}. 

\begin{proposition} \label{propLuo} Let $J$ be the map from $ L^2({\mathbb C}^n,\mu_{\lambda})$ onto $L^2({\mathbb
R}^{2n})$  defined by $J(F)=F\circ j$.  Then we have $U^1=(W_0J)^{-1}$.\end{proposition}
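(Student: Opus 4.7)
My strategy is to recast the claim as a factorisation of $S^1$ and then apply uniqueness of the polar decomposition. I first observe that $J$ is unitary: the change of variable $z = q - \lambda pi$ has Jacobian $\lambda^n$, and together with the normalisations $d\mu_\lambda = (2\pi\lambda)^{-n}dx\,dy$ and $d\tilde\mu = (2\pi)^{-n}dp\,dq$ this matches the two measures exactly, so $J : L^2({\mathbb C}^n,\mu_\lambda) \to L^2({\mathbb R}^{2n},\tilde\mu)$ is a unitary isomorphism. Combined with the classical unitarity of the Weyl correspondence $W_0 : L^2({\mathbb R}^{2n},\tilde\mu) \to {\mathcal L}_2(L^2({\mathbb R}^n))$ (already implicit in the preceding proposition), this makes $(W_0 J)^{-1}$ unitary from ${\mathcal L}_2(L^2({\mathbb R}^n))$ onto $L^2({\mathbb C}^n,\mu_\lambda)$.

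Since the polar decomposition $S^1 = ({\mathcal B}^0)^{1/2} U^1$ is uniquely determined by its positive factor, with $U^1$ a unitary, proving $U^1 = (W_0 J)^{-1}$ is equivalent to proving the identity
\begin{equation*}
S^1(A)\; =\; ({\mathcal B}^0)^{1/2}\bigl(W_0^{-1}(A)\circ j^{-1}\bigr)
\end{equation*}
for every $A \in {\mathcal L}_2(L^2({\mathbb R}^n))$. This is the classical passage between the Wick and the Weyl symbol of an operator, with the identification $z = q - \lambda pi$ built in. To verify it I would test on the dense family $A = W_0(f)$ with $f\in{\mathcal S}({\mathbb R}^{2n})$, in which case the right-hand side reduces to $({\mathcal B}^0)^{1/2}(f\circ j^{-1})$. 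Writing $\langle B_0 W_0(f) B_0^{-1} e_z, e_z\rangle / \langle e_z, e_z\rangle$ as an iterated Gaussian integral using the explicit formulas for $B_0$, $B_0^{-1}$, $W_0(f)$ and $e_z$, I would evaluate the inner integrals to exhibit $S^1(W_0(f))$ as the convolution of $f\circ j^{-1}$ against a Gaussian kernel on ${\mathbb C}^n$, and then identify that kernel with the heat semigroup $e^{(\lambda/4)\Delta}$. Since ${\mathcal B}^0 = e^{(\lambda/2)\Delta}$ as recalled in the text, one has $({\mathcal B}^0)^{1/2} = e^{(\lambda/4)\Delta}$, and the identity falls out.

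The main difficulty lies in the bookkeeping of quadratic Gaussian exponents: tracking the $\lambda$-dependent normalisations in $B_0$, $\mu_\lambda$ and $\tilde\mu$, monitoring convergence at intermediate steps, and confirming that the effective Gaussian has precisely the width $\lambda/4$. To bypass much of that computation, a second line of attack would exploit $G_0$-covariance: both $S^1$ and $({\mathcal B}^0)^{1/2}(W_0J)^{-1}$ intertwine $A\mapsto \sigma_0(g)A\sigma_0(g)^{-1}$ with $F\mapsto F(g^{-1}\cdot z)$, because $({\mathcal B}^0)^{1/2}$ is a Fourier multiplier in $z$ and so commutes with the translations $z\mapsto z+\lambda(b-ia)$. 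It would then be enough to verify the identity on a single operator whose $G_0$-orbit is total in ${\mathcal L}_2(L^2({\mathbb R}^n))$, for instance on the rank-one projection onto the Bargmann vacuum, for which both $S^1$ and $W_0^{-1}$ can be written down directly.
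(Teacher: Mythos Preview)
The paper does not actually supply a proof of this proposition: it is stated with attribution to \cite{Luo} and \cite{OZ} and no argument is given in the text. Your sketch is precisely the classical route taken in those references---namely, reduce via uniqueness of the polar decomposition to the identity $S^1(A)=e^{(\lambda/4)\Delta}\bigl(W_0^{-1}(A)\circ j^{-1}\bigr)$, which is the standard Wick-to-Weyl relation on ${\mathbb C}^n$. Your verification that $J$ is unitary (Jacobian $\lambda^n$ exactly compensating the factor $(2\pi\lambda)^{-n}$ in $d\mu_\lambda$ versus $(2\pi)^{-n}$ in $d\tilde\mu$) and your observation that $({\mathcal B}^0)^{1/2}=e^{(\lambda/4)\Delta}$ are both correct, and the injectivity of $({\mathcal B}^0)^{1/2}$ (needed for uniqueness of the unitary factor) follows from ${\mathcal B}^0=e^{(\lambda/2)\Delta}$ being a strictly positive Fourier multiplier.

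Your alternative covariance argument is also sound in principle, though in practice the direct Gaussian computation (or an appeal to the explicit Wick--Weyl formula in \cite{Fo}) is what the cited papers do; the covariance route buys conceptual clarity but does not substantially shorten the work, since one still has to compute both symbols of the vacuum projection and match the Gaussian widths.
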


Finally, note that we can obtain Stratonovich-Weyl correspondences for $(G_0, \sigma_{0},{\mathcal
O}_{\lambda})$ and $(G_0,
\pi_{0},{\mathcal O}_{\lambda})$ by transferring $W_0^{-1}$ and $U^0$
by using $\Phi_{\lambda}$ and $\Psi_{\lambda}$.
More precisely, let $\nu_{\lambda}$ be the $G_0$-invariant measure on
${\mathcal O}_{\lambda}$ defined by
$\nu_{\lambda}:=(\Phi_{\lambda}^{-1})^{\ast}(\mu_{\lambda})=(\Psi_{\lambda}^{-1})^{\ast}({\tilde
\mu})$. Then the maps $\tau_{\Phi_{\lambda}}:F\rightarrow F\circ
\Phi_{\lambda}^{-1}$ from $L^2({\mathbb C}^n,\mu_{\lambda})$ onto
$L^2({\mathcal O}_{\lambda},\nu_{\lambda})$ and
$\tau_{\Psi_{\lambda}}:F\rightarrow F\circ \Psi_{\lambda}^{-1}$ from
$L^2({\mathbb R}^{2n})$ onto $L^2({\mathcal
O}_{\lambda},\nu_{\lambda})$ are unitary and we have
$\tau_{\Phi_{\lambda}}=\tau_{\Psi_{\lambda}} J$. Hence we can assert
the following proposition.

\begin{proposition} \label{propSwc} The map ${\mathcal W}_1:=\tau_{\Psi_{\lambda}}W_0^{-1}$ is a
Stratonovich-Weyl correspondence for $(G_0, \sigma_{0},{\mathcal
O}_{\lambda})$, the map ${\mathcal W}_2:=\tau_{\Phi_{\lambda}}U^0$
is a Stratonovich-Weyl correspondence for $(G_0,
\pi_{0},{\mathcal O}_{\lambda})$ and we have ${\mathcal
W}_1={\mathcal W}_2I_{B_0}$.
\end{proposition}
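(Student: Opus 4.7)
The plan is to transport the two previously established Stratonovich--Weyl correspondences (for $W_0^{-1}$ on ${\mathbb R}^{2n}$ and for $U^0$ on ${\mathbb C}^n$) along the equivariant diffeomorphisms $\Psi_\lambda$ and $\Phi_\lambda$, and then read off the identity ${\mathcal W}_1 = {\mathcal W}_2 I_{B_0}$ from the relations already recorded between $U^0$, $U^1$, $W_0$ and $J$. Since $\nu_\lambda$ is defined as the pushforward of $\mu_\lambda$ by $\Phi_\lambda$ and of $\tilde\mu$ by $\Psi_\lambda$, the operators $\tau_{\Phi_\lambda}$ and $\tau_{\Psi_\lambda}$ are unitary between the corresponding $L^2$-spaces; this is really the only analytic ingredient beyond what is already in the excerpt.

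For ${\mathcal W}_1 = \tau_{\Psi_\lambda} W_0^{-1}$, I would verify the three axioms in the definition. The complex-conjugation axiom and the traciality axiom pass to the orbit side mechanically: post-composition with $\tau_{\Psi_\lambda}$ is just a change of variable $(p,q)\mapsto \Psi_\lambda(p,q)$, so conjugation commutes with it, and traciality becomes the change-of-variable formula $\int_{{\mathcal O}_\lambda}\cdots\, d\nu_\lambda = \int_{{\mathbb R}^{2n}}\cdots\, d\tilde\mu$ built into the definition of $\nu_\lambda$. Covariance is the only point to inspect: the known relation $W_0^{-1}(\sigma_0(g)A\sigma_0(g)^{-1})(p,q) = W_0^{-1}(A)(g^{-1}\cdot(p,q))$ combines with property (2) of the proposition on $\Psi_\lambda$, namely $\Psi_\lambda(g\cdot(p,q)) = \Ad^\ast(g)\,\Psi_\lambda(p,q)$, to yield ${\mathcal W}_1(\sigma_0(g)A\sigma_0(g)^{-1})(\xi) = {\mathcal W}_1(A)(\Ad^\ast(g^{-1})\xi)$ on ${\mathcal O}_\lambda$. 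The argument for ${\mathcal W}_2 = \tau_{\Phi_\lambda} U^0$ is identical, using instead that $U^0$ is a Stratonovich--Weyl correspondence for $(G_0,\pi_0,{\mathbb C}^n)$ and that $\Phi_\lambda$ is $G_0$-equivariant.

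The identity ${\mathcal W}_1 = {\mathcal W}_2 I_{B_0}$ is then a short bookkeeping computation. From $U^1 = U^0 I_{B_0}$ and Proposition \ref{propLuo}, which gives $U^1 = (W_0 J)^{-1} = J^{-1} W_0^{-1}$, we obtain $U^0 I_{B_0} = J^{-1} W_0^{-1}$. Applying $\tau_{\Phi_\lambda}$ on the left and using the relation $\tau_{\Phi_\lambda} = \tau_{\Psi_\lambda} J$ already noted in the text, we get $\tau_{\Phi_\lambda} U^0 I_{B_0} = \tau_{\Psi_\lambda} J J^{-1} W_0^{-1} = \tau_{\Psi_\lambda} W_0^{-1}$, which is exactly ${\mathcal W}_2 I_{B_0} = {\mathcal W}_1$.

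I do not anticipate a serious obstacle: every ingredient is already in place, and the proof is essentially a transport-of-structure argument. The only place where one must be slightly attentive is the covariance check, where one has to make sure that the $G_0$-action on ${\mathbb R}^{2n}$ (respectively on ${\mathbb C}^n$) used to state $G_0$-covariance of $W_0^{-1}$ (respectively of $U^0$) is intertwined with the coadjoint action on ${\mathcal O}_\lambda$ by $\Psi_\lambda$ (respectively $\Phi_\lambda$); this is precisely what part (2) of the two preceding propositions guarantees.
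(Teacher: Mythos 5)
Your proposal is correct and follows exactly the route the paper takes (the paper in fact offers no separate proof, treating the proposition as an immediate consequence of the preceding observations): transport of the Stratonovich--Weyl axioms along the unitary, equivariant maps $\tau_{\Psi_{\lambda}}$ and $\tau_{\Phi_{\lambda}}$, with covariance guaranteed by part (2) of the propositions on $\Psi_{\lambda}$ and $\Phi_{\lambda}$, and the identity ${\mathcal W}_1={\mathcal W}_2I_{B_0}$ obtained from $U^1=U^0I_{B_0}$, $U^1=(W_0J)^{-1}$ and $\tau_{\Phi_{\lambda}}=\tau_{\Psi_{\lambda}}J$. Nothing is missing.
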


\section {Generalities on Heisenberg motion groups} \label{sec:gen}

In order to introduce the Heisenberg motion groups, it is convenient to write the elements
of the Heisenberg group $G_0$ and its multiplication law as follows. For each $z\in {\mathbb C}^n$, $c\in {\mathbb R}$,
we denote here by $(z,{\bar z}, c)$ the element $G_0$ which is denoted by $[\Rea z, \Ima z, c]$ in Section \ref{sec:Heisen}. Moreover, for each $z,\,w \in {\mathbb C}^{n}$, we denote $zw:=\sum_{k=1}^nz_kw_k$ and  we consider the symplectic form $\omega$ on ${\mathbb C}^{2n}$ defined by
\begin{equation*}\omega
((z,w),(z',w'))=\frac{i}{2}(zw'-z'w).
\end{equation*} for $z,w,z',w'\in {\mathbb C}^n$.
Then the multiplication of $G_0$ is given by
\begin{equation}\label{multHeisen}((z,{\bar
z}),c)\cdot ((z',{\bar z'}),c')=((z+z',{\bar z}+{\bar
z'}),c+c'+\tfrac{1}{2}\omega ((z,{\bar z}),(z',{\bar
z'}))),\end{equation}
the complexification $G_0^c$ of $G_0$ is
$G_0^c=\{((z,w),c)\,:z,w \in {\mathbb C}^n, c\in {\mathbb C}\}$ and
the multiplication of $G_0^c$ is obtained by replacing $(z,{\bar z})$
by $(z,w)$ and $(z',{\bar z'})$ by $(z',w')$ in Eq. \ref{multHeisen}. 

Now, let $K$ be a closed subgroup of $U(n)$. Then $K$ acts on $G_0$ by $k\cdot ((z,{\bar
z}),c)=((kz,\bar
{kz}),c)$ and we can form the semidirect product $G:=G_0\rtimes K$ which is called a Heisenberg motion
group. The elements of $G$ can be written as $((z,{\bar
z}),c,k)$ where $z\in {\mathbb C}^n$, $c\in {\mathbb R}$ and $k\in
K$. The multiplication of $G$ is then given by
\begin{equation*}((z,{\bar
z}),c,k)\cdot ((z',{\bar z'}),c',k')=((z,{\bar z})+(kz',\bar
{kz'}),c+c'+\tfrac{1}{2}\omega ((z,{\bar z}),(kz',\bar
{kz'})),kk').\end{equation*}

We denote by $K^c$ the complexification of $K$ and we consider the action of $K^c$ on ${\mathbb C}^n\times
{\mathbb C}^n$ given by $k\cdot (z,w)=(kz, (k^t)^{-1}w)$ (here, the subscript $t$ denotes transposition).
The group $G^c$ is then
the semidirect product $G^c=G_0^c\rtimes K^c$. The elements of $G^c$ can be written as $((z,w),c,k)$ where $z,\,w\in {\mathbb C}^n$, $c\in {\mathbb C}$ and $k\in
K^c$ and the multiplication law of $G^c$ is given by
\begin{equation*}((z,w),c,k)\cdot ((z',w'),c',k')=((z,w)+k\cdot (z',w'),c+c'+\tfrac{1}{2}\omega ((z,w),k\cdot (z',w')),kk').\end{equation*}

We denote by
${\mathfrak k}$, ${\mathfrak k}^c$, ${\mathfrak g}$ and ${\mathfrak
g}^c$ the Lie algebras of $K$, $K^c$, $G$ and $G^c$. The derived action of ${\mathfrak k}^c$ on 
${\mathbb C}^n\times {\mathbb C}^n$ is then $A\cdot (z,w):=(Az,-A^tw)$ and
the Lie
brackets of ${\mathfrak g}^c$ are given by
\begin{equation*}[((z,w),c,A),((z',w'),c',A')]=(A\cdot (z',w')-A'\cdot(z,w),
\omega ((z,w),(z',w')),[A,A']).\end{equation*}

Let ${\tilde K}$ be the subgroup of $G$ defined by 
${\tilde K}:=\{((0,0),c,k)\,:\,c\in {\mathbb R},\,k\in K\}$. Also, let ${\mathfrak
h}_0$ be a Cartan subalgebra of ${\mathfrak
k}$. Then  the Lie algebra $\tilde {\mathfrak k}$ of ${\tilde K}$ is a 
maximal compactly embedded subalgebra of $\mathfrak g$ and the subalgebra $\mathfrak h$ of $\mathfrak g$
consisting of all elements of the form $((0,0),c,A)$ where $c\in {\mathbb R}$ and $A\in {\mathfrak
h}_0$ is a compactly embedded Cartan subalgebra of $\mathfrak g$ \cite{Ne}, p. 250.

Following \cite{Ne}, Chapter XII.1, we set ${\mathfrak p}^+=\{((z,0),0,0)\,:\,z\in {\mathbb C}^n\}$ and ${\mathfrak p}^-=\{((0,w),0,0)\,:\,w\in {\mathbb C}^n\}$ and we denote by $P^+$ and  $P^-$ the corresponding analytic subgroups
of $G^c$, that is, 
$P^+=\{((z,0),0,I_n)\,:\,z\in {\mathbb C}^n\}$ and $P^-=\{((0,w),0,I_n)\,:\,w\in {\mathbb C}^n\}$.

Note that $G$ is a group of the Harish-Chandra type
\cite{Ne}, p. 507 (see also \cite{Sa} and \cite{He}, Chapter VIII), that is, the following properties are satisfied:
\begin{enumerate}\item ${\mathfrak g}^c={\mathfrak p}^+\oplus {\tilde {\mathfrak
k}}^c \oplus {\mathfrak p}^-$ is a direct sum of vector spaces,
$({\mathfrak p}^+)^{\ast}={\mathfrak p}^-$ and $[\tilde {\mathfrak
k}^c,{\mathfrak p}^{\pm}]\subset {\mathfrak p}^{\pm}$;
\item The multiplication map $P^+{\tilde K}^cP^-\rightarrow G^c$,
$(z,k,y)\rightarrow zky$ is a biholomorphic diffeomorphism onto its
open image;
\item $G\subset P^+{\tilde K}^cP^-$ and $G\cap {\tilde K}^cP^-={\tilde K}$. \end{enumerate}

We denote by $p_{{\mathfrak p}^+}$, $p_{{\tilde {\mathfrak k}}^c}$ and $p_{{\mathfrak p}^-}$
the projections of ${\mathfrak g}^c$ onto ${\mathfrak p}^+$,
${\tilde {\mathfrak k}}^c$ and ${\mathfrak p}^-$ associated with the above direct
decomposition.

We can easily verify that each $g=((z_0,w_0), c_0,k)\in G^c$ has a $P^+{\tilde K}^cP^-$-decomposition given by
\begin{equation*} g=((z_0,0),0,I_n)\cdot ((0,0),c,k)\cdot ((0,w_0),0,I_n)\end{equation*}
where $c=c_0-\tfrac{i}{4}z_0w_0$. We denote by
$\zeta:\, P^+{\tilde K}^cP^-\rightarrow P^+$, $\kappa:\, P^+{\tilde K}^cP^-\rightarrow
K^c$  and $\eta:\, P^+{\tilde K}^cP^-\rightarrow P^-$ the projections onto
$P^+$-, ${\tilde K}^c$- and $P^-$-components.

We can introduce an action (defined almost everywhere) of $G$ on ${\mathfrak p}^+$ as follows.
For $Z\in {\mathfrak p}^+$ and
$g\in G^c$, we define 
$g\cdot Z \in {\mathfrak p}^+$ by $g\cdot Z:=\log \zeta (g\exp Z)$.
From the above formula for the $P^+{\tilde K}^cP^-$-decomposition, we deduce that
if $g=((z_0,w_0), c_0,k)\in G$
and $Z=((z,0),0,0)\in {\mathfrak p}^+$ then we have $g\cdot Z=\log  \zeta (g\exp Z)=((z_0+kz,0),0,0)$.
Note that  $\mathcal D:=G\cdot 0={\mathfrak p}^+\simeq {\mathbb C}^n$ here.

A useful section $Z\rightarrow g_Z$ for the action of $G$ on $\mathcal D$ can be obtained by using Proposition 4.5
of \cite{CaPar}. Here we get $g_Z=((z,{\bar z}),0,I_n)$ for each $Z=((z,0),0,0)$, $z\in {\mathbb C}^n$.

Now we compute the adjoint and coadjoint actions of $G^c$. 
Let $g=(v_0,c_0,k_0)\in G^c$ where $v_0\in {\mathbb C}^{2n}$,
$c_0\in {\mathbb C}$, $k_0\in K^c$ and
$X=(w,c,A)\in {\mathfrak g}^c$ where $w\in {\mathbb C}^{2n}$, $c\in
{\mathbb C}$ and $A\in {\mathfrak k}^c$. We can easily verify that
\begin{align*}\Ad&(g)X=\frac{d}{dt}(g\exp(tX)g^{-1})\vert_{t=0}\\
&=\bigl( k_0w-(\Ad(k_0)A)\cdot v_0, c+\omega (v_0,k_0w)-\tfrac{1}{2}\omega
(v_0,(\Ad(k_0)A)\cdot v_0),\Ad(k_0)A\bigr).\end{align*}

Now, let us denote by $\xi=(u,d,\phi)$, where $u\in {\mathbb C}^{2n}$,
$d\in {\mathbb C}$ and $\phi \in {({\mathfrak k}^c)}^{\ast}$, the
element of $({\mathfrak g}^c)^{\ast}$ defined by
\begin{equation*}\langle \xi,(w,c,A)\rangle =\omega (u,w)+dc+\langle
\phi, A\rangle. \end{equation*} Also, for $u,v\in {\mathbb
C}^{2n}$, we denote by $v\times u$ the element of $({{\mathfrak
k}^c})^{\ast}$ defined by $\langle v\times u, A\rangle:=\omega
(u,A\cdot v)$ for $A\in {\mathfrak k}^c$.
Then, from the above formula for the adjoint action, we deduce  that for each $\xi=(u,d,\phi)\in ({\mathfrak g}^c)^{\ast}$ and
$g=(v_0,c_0,k_0)\in G^c$ we have
\begin{equation*} \Ad^{\ast}(g)\xi= \bigl(k_0u-dv_0,d,\Ad^{\ast}(k_0)\phi
+v_0\times (k_0u-\tfrac {d}{2}v_0)\bigr)\end{equation*} By
restriction, we also get the analogous formula for the coadjoint action of
$G$. From this, we see that if a coadjoint orbit of $G$ contains a point $(u,d,\phi)$
with $d\not= 0$ then it also contains a point of the form $(0,d,\phi_0)$. Such an orbit is called
{\it generic}.

\section {Fock model  for Heisenberg motion groups} \label{sec:Fock}

In this section, we introduce the Fock model of the unitary irreducible representations of $G$
by using the general method of \cite{Ne}, Chapter XII that we describe here briefly.

Let $\rho$ be a unitary irreducible representation of $K$ on a (finite-dimensional) Hilbert space $V$
and $\lambda \in {\mathbb R}$. Let $\tilde \rho$ be the representation of $\tilde K$ on $V$ defined by ${\tilde \rho} ((0,0),c, k)=e^{i\lambda c}\rho (k)$ for each $c\in {\mathbb R}$ and $k\in K$.

For each $Z, W \in {\mathcal D}$, let $K(Z,W):={\tilde \rho}
(\kappa (\exp W^{\ast}\exp Z))^{-1}$ and for each $g\in G$, $Z\in {\mathcal D}$, let $J(g,Z):=\tilde{ \rho} (\kappa (g\exp Z))$, \cite{Ne}, Chapter XII.1. Consider the Hilbert space $\tilde {\mathcal F}$ of all holomorphic functions on $\mathcal D$ with
values in $V$ such that \begin{equation*}\Vert f\Vert^2_{\tilde {\mathcal F}}
:=\int_{\mathcal D}\,\langle K(Z,Z)^{-1} f(Z),f(Z)\rangle_V\,d\mu
(Z)<+\infty
\end{equation*}
where $\mu$ denotes an invariant $G$-measure on $\mathcal D$. Then the equation 
\begin{equation*}{\tilde \pi} (g)f(Z)=J(g^{-1},Z)^{-1}\,f(g^{-1}\cdot Z)\end{equation*}
defines a unitary representation of $G$ on $\tilde {\mathcal F}$.
This representation can be also obtained by holomorphic induction from $\tilde {\rho}$,
that is, it corresponds to the natural action of $G$ on the square-integrable holomorphic sections of the Hilbert $G$-bundle $G\times_{\tilde{\rho}}V$ over $G/K\cong \mathcal D$ \cite{CaRiv}. Note also that $\tilde {\pi}$ is irreducible since $\tilde {\rho}$ is irreducible, \cite{Ne}, p. 515.

Here we can easily compute $K$ and $J$. For each $Z=((z,0),0,0),\,W=((w,0),0,0)\in {\mathcal D}$, we have $K(Z,W)=e^{\lambda z{\bar w}/2}I_V$ and for each $g=((z_0,{\bar z_0}), c_0,k)\in G$ and $Z=((z,0),0,0),\in {\mathcal D}$, we have
\begin{equation*}J(g,Z)=\exp \left(i\lambda c_0+\tfrac{\lambda}{2}{\bar z_0}(kz)+\tfrac{\lambda}{4}\vert z_0\vert^2\right)\,\rho(k).\end{equation*} 

Moreover, $\mu$ can be taken to be the $G$-invariant measure on
${\mathcal D}\simeq {\mathbb C}^n$ defined by
 $d\mu (Z):={\lambda}^n({2\pi})^{-n}\,dx\,dy$. Here $Z=((z,0),0,0)$ and $z=x+iy$ with $x$ and $y$ in ${\mathbb R}^n$. From now on, we identify $Z=((z,0),0,0)\in {\mathcal D}$ with $z\in {\mathbb C}^n$ and each function on
${\mathcal D}$ with the corresponding function on ${\mathbb C}^n$.

Consequently, the Hilbert product on
$\tilde{\mathcal F}$ is given by
\begin{equation*}\langle f,g\rangle_{\tilde{\mathcal F}}
=\int_{{\mathbb C}^n} \langle f(z),g(z)\rangle_V\, e^{-\lambda\vert
z\vert^2/2}\, \left(\frac{\lambda}{2\pi}\right)^n\,dx\,dy\end{equation*}
and we get the following formula for $\tilde{\pi}$:
\begin{equation*}(\tilde{\pi}(g)f)(z)=\exp \left(i\lambda c_0+\tfrac{\lambda}{2}{\bar z_0}z-\tfrac{\lambda}{4}\vert z_0\vert^2\right)\,\rho(k)\,f(k^{-1}(z-z_0)) \end{equation*}
where $g=((z_0,{\bar z_0}), c_0,k)\in G$ and $z \in {\mathbb C}^n$.

In fact, in order to use the results of Section \ref{sec:Heisen}, it is convenient to replace $\tilde{\pi}$ by
an equivalent representation $\pi$ whose restriction to $G_0$ is precisely $\pi_0$. To this aim, we consider
the Fock space $\mathcal F$ of all holomorphic functions $f: {\mathbb C}^n\rightarrow V$ such that
\begin{equation*}\Vert f\Vert_{\mathcal F}^2
:=\int_{{\mathbb C}^n} \Vert f(z)\Vert_V^2\, e^{-\vert
z\vert^2/2\lambda}\, d\mu_{\lambda}(z)< +\infty.\end{equation*}

Let ${\mathcal J}:{\tilde{\mathcal F}}\rightarrow \mathcal F$ be the unitary operator defined by ${\mathcal J}(f)(z)=f(i{\lambda}^{-1}z)$
and set $\pi(g):={\mathcal J}\tilde{\pi}(g){\mathcal J}^{-1}$ for each $g\in G$. Then we have

\begin{equation*}({\pi}(g)f)(z)=\exp \left(i\lambda c_0+\tfrac{1}{2}i{\bar z_0}z-\tfrac{\lambda}{4}\vert z_0\vert^2\right)\,\rho(k)\,f(k^{-1}(z+i\lambda z_0)) \end{equation*}
where $g=((z_0,{\bar z_0}), c_0,k)\in G$ and $z\in {\mathbb C}^n$.

We can easily compute the differential of $\pi$: 

\begin{proposition}\label{redpi}  Let $X=((a,{\bar a}),c,A)\in {\mathfrak g}$. Then, for each $f\in {\mathcal F}$ and each $z\in {\mathbb C}^n$, we have
\begin{equation*}(d\pi (X)f)(z)=d\rho(A)f(z)+i(\lambda c+\tfrac{1}{2}{\bar a}z)f(z)+df_z(-Az+i\lambda a).\end{equation*}
\end{proposition}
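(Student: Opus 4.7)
The plan is to compute $d\pi(X)f(z) = \tfrac{d}{dt}\big|_{t=0}(\pi(\exp tX)f)(z)$ directly from the explicit formula for $\pi$ displayed just before the proposition, and then read off the three contributions.

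First, I would observe that to first order in $t$, one has
\begin{equation*}
\exp_G(tX) = ((ta,t\bar a),tc,\exp_K(tA)) + O(t^2),
\end{equation*}
the higher-order Baker--Campbell--Hausdorff corrections being irrelevant for the derivative at $t=0$. Substituting $(z_0,\bar z_0,c_0,k) = (ta,t\bar a,tc,\exp tA)$ into
\begin{equation*}
(\pi(g)f)(z) = \exp\!\bigl(i\lambda c_0+\tfrac{1}{2}i\bar z_0 z-\tfrac{\lambda}{4}|z_0|^2\bigr)\,\rho(k)\,f(k^{-1}(z+i\lambda z_0))
\end{equation*}
gives an expression whose value at $t=0$ is $f(z)$.

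Next, I would differentiate in $t$ at $t=0$ using the product/chain rule, obtaining three contributions. The exponential scalar prefactor contributes its logarithmic derivative at $t=0$, namely $(i\lambda c + \tfrac{1}{2}i\bar a z)f(z)$; the quadratic term $-\tfrac{\lambda}{4}t^2|a|^2$ disappears since it is $O(t^2)$. Differentiation of $\rho(\exp tA)$ at $t=0$ yields $d\rho(A)f(z)$. Finally, the argument $\exp(-tA)(z+i\lambda t a)$ has derivative at $t=0$ equal to $-Az + i\lambda a$, so the chain rule on $f$ gives $df_z(-Az+i\lambda a)$.

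Summing the three contributions produces the stated formula. There is no real obstacle here: the only point deserving a line of justification is that quadratic group-theoretic corrections in $\exp_G(tX)$ (from the Heisenberg commutator and the semidirect product) do not contribute at $t=0$, after which the computation reduces to three elementary derivatives.
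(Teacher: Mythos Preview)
Your argument is correct and is exactly the computation the paper has in mind: the paper states the proposition without proof, prefacing it only with ``We can easily compute the differential of $\pi$'', and your direct differentiation of $(\pi(\exp tX)f)(z)$ at $t=0$ via the product rule is the intended elementary verification. There is nothing to add.
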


Clearly, one has ${\mathcal F}={\mathcal F}_0\otimes V$. For $f_0\in {\mathcal F}_0$ and $v\in V$,
we denote by $f_0\otimes v$ the function $z\rightarrow f_0(z)v$. Moreover, if $A_0$ is an operator of 
${\mathcal F}_0$ and $A_1$ is an operator of $V$ then we denote by $A_0\otimes A_1$ the operator
of ${\mathcal F}$ defined by $(A_0\otimes A_1)(f_0\otimes v)=A_0f_0\otimes A_1v$.

Let $\tau$ be the left-regular representation of $K$ on 
${\mathcal F}_0$, that is, $(\tau (k)f_0)(z)=f_0(k^{-1}z)$. Then we have 
\begin{equation}\label{eq:decomppi} \pi((z_0,{\bar z_0}),c_0,k)=\pi_0((z_0,{\bar z_0}),c_0) \tau(k)\otimes \rho(k)\end{equation} for each $z_0\in {\mathbb C}^n$,
$c_0\in {\mathbb R}$ and $k\in K$. Note that this is precisely Formula (3.18) in \cite{BJLR}.

\section {Stratonovich-Weyl correspondence via Berezin quantization} \label{sec:swcb}

In this section, we introduce the Berezin quantization map associated with $\pi$ and the corresponding
Stratonovich-Weyl correspondence. We consider first the Berezin quantization map associated with $\rho$
\cite{ACGc}, \cite{CaS}, \cite{Wild2}.

Let us fix a positive root system of $\mathfrak k$ relative to ${\mathfrak h}_0$ and denote by $\Lambda \in 
({\mathfrak h}_0^c)^{\ast}$ the highest weight of $\rho$ and by ${\mathfrak k}^c= {\mathfrak n}^+ \oplus {\mathfrak h}_0^c \oplus
{\mathfrak n}^-$ the corresponding triangular decomposition of ${\mathfrak k}^c$. Let $\tilde{\varphi_0}$ be
the element of $({\mathfrak k}^c)^{\ast}$ defined by $\tilde{\varphi_0}=-i\Lambda$ on ${\mathfrak h}_0$ and
by $\tilde{\varphi_0}=0$ on ${\mathfrak n}^{\pm}$. We denote by $\varphi_0$ the restriction of $\tilde{\varphi_0}$
to $\mathfrak k$. Then the orbit  $o(\varphi_0)$ of $\varphi_0$ under the coadjoint action of $K$
is said to be associated with $\rho$ \cite{CaWQ}, \cite{Wild2}.

Here we assume that $\varphi_0$ is regular in the sense that the stabilizer of $\varphi_0$
for the coadjoint action of $K$ is precisely the connected subgroup $H_0$ of $K$ with Lie
algebra ${\mathfrak h}_0$ \cite{CaS}.

Note that a complex structure on $o(\varphi_0)$ is then defined
by the diffeomorphism $o(\varphi_0)\simeq K/H_0 \simeq K^c/H_0^cN^-$
where $H_0$ is the connected subgroup of $K$ with Lie algebra  ${\mathfrak h}_0$ and $N^-$ is the analytic subgroup of $K^c$ with Lie algebra
${\mathfrak n}^-$.

Without loss of generality, we can assume that $V$ is a space
of holomorphic sections of a complex line bundle over $o(\varphi_0)$ as in \cite{CaS}.
Let $\varphi \in o(\varphi_0)$. For each $\hat{\varphi}\not= 0$ in the fiber over $\varphi$,
there exists a unique function $e_{\hat{\varphi}}\in V$ (a coherent
state) such that $a(\varphi)=\langle a,e_{\hat{\varphi}} \rangle_Ve_{\hat{\varphi}}$ for
each $a\in V$.

The Berezin calculus on $o(\varphi_0)$ associates
with each operator $B$ on $V$ the complex-valued function $s(B)$ on
$o(\varphi_0)$ defined by
\begin{equation*}s(B)(\varphi)=\frac {\langle Be_{\hat{\varphi}},e_{\hat{\varphi}} \rangle_V}
{\langle e_{\hat{\varphi}},e_{\hat{\varphi}} \rangle_V} \end{equation*}
which is
called the symbol of $B$. This definition makes sense since the right side of the equation does not depend on $\hat{\varphi}$ in the fiber over ${\varphi}$ but only on $\varphi$. We denote by $Sy(o(\varphi_0))$ the space of all such symbols.
Then we have the following proposition, see \cite{CGR}, \cite{ACGc} and \cite{CaS}.

\begin{proposition} \label{propripetit}
\begin{enumerate}
\item The map $B\rightarrow s(B)$ is injective.

\item  For each operator $B$ on $V$, we have $s(B^{\ast})=\overline
{s(B)}$.

\item  For each $\varphi \in o(\varphi_0)$, $k\in K$ and
$B\in \End (V)$, we have \begin{equation*} s(B)(\Ad^{\ast}
(k)\varphi)=s(\rho (k)^{-1}B \rho (k))(\varphi).\end{equation*}

\item  For each $A\in {\mathfrak k}$ and $\varphi \in o(\varphi_0)$,
we have $s(d\rho(A))(\varphi)=i\langle \varphi, A\rangle $.
\end{enumerate}
\end{proposition}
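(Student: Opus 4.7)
The plan is to establish the four properties directly from the reproducing property of the coherent states $e_{\hat\varphi}$ together with the transformation law of $\rho$ on the line bundle over $o(\varphi_0)$.

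For (1), since $V$ is finite-dimensional and $\rho$ is irreducible, the nonzero coherent states span $V$: their linear span is a nonzero $K$-invariant subspace because $\rho(k)\,e_{\hat\varphi}$ is proportional to $e_{k\cdot\hat\varphi}$. The data $\{\langle B e_{\hat\varphi},e_{\hat\varphi}\rangle_V\}$ therefore determines, by polarization, the sesquilinear form $(a,b)\mapsto\langle Ba,b\rangle_V$ on a spanning family, and hence the operator $B$ itself. Property (2) is immediate from the definition: the denominator is real, and $\langle B^{\ast}e_{\hat\varphi},e_{\hat\varphi}\rangle_V=\overline{\langle Be_{\hat\varphi},e_{\hat\varphi}\rangle_V}$.

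For (3), the key input is the transformation law $\rho(k)\,e_{\hat\varphi}=e_{k\cdot\hat\varphi}$, where $k\cdot\hat\varphi$ lies in the fiber over $\Ad^{\ast}(k)\varphi$; this follows from the $K$-equivariance of the reproducing kernel of $V$ under the natural action on holomorphic sections. Substituting into the ratio defining $s$, the fiber scalars cancel and one reads off the stated equivariance.

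For (4), I view both sides as $K$-equivariant linear maps $\mathfrak k\to C^{\infty}(o(\varphi_0))$: the left via (3) combined with $\rho(k)^{-1}d\rho(A)\rho(k)=d\rho(\Ad(k^{-1})A)$, and the right by definition of the coadjoint action. Since $K$ acts transitively on $o(\varphi_0)$, it suffices to check, for every $A\in\mathfrak k$, that $\langle d\rho(A)e_{\hat\varphi_0},e_{\hat\varphi_0}\rangle_V/\|e_{\hat\varphi_0}\|_V^2=i\langle\varphi_0,A\rangle$. The coherent state $e_{\hat\varphi_0}$ at the base point is proportional to a highest weight vector $v_{\Lambda}$ of $\rho$. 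For $A\in\mathfrak h_0$ the identity reduces to the eigenvalue equation $d\rho(A)v_{\Lambda}=\Lambda(A)v_{\Lambda}$, together with the convention $\tilde\varphi_0|_{\mathfrak h_0}=-i\Lambda$, which gives $i\langle\varphi_0,A\rangle=\Lambda(A)$. For $A$ in the orthocomplement of $\mathfrak h_0$ in $\mathfrak k$, whose complexification is $\mathfrak n^+\oplus\mathfrak n^-$, both sides vanish: the right because $\tilde\varphi_0|_{\mathfrak n^{\pm}}=0$, and the left because $d\rho(\mathfrak n^+)$ annihilates $v_{\Lambda}$ while $\langle d\rho(B)v_{\Lambda},v_{\Lambda}\rangle_V=-\langle v_{\Lambda},d\rho(B^{\ast})v_{\Lambda}\rangle_V=0$ for $B\in\mathfrak n^-$ since $B^{\ast}\in\mathfrak n^+$. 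The main obstacle is simply being careful with the normalization of $e_{\hat\varphi_0}$ against $v_{\Lambda}$ and with the sign conventions linking $\Lambda$, $\tilde\varphi_0$ and $\varphi_0$; no substantive computation is required beyond the identities above.
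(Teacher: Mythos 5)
The paper offers no proof of this proposition --- it is quoted from \cite{CGR}, \cite{ACGc} and \cite{CaS} --- so your argument has to stand on its own. Parts (2), (3) and (4) do: (2) is immediate; (3) is the standard computation from the transformation law $\rho(k)e_{\hat\varphi}\propto e_{k\cdot\hat\varphi}$ together with unitarity of $\rho(k)$ (the fiber scalars indeed cancel in the ratio); and your reduction of (4) to the base point by equivariance and transitivity, followed by the weight computation on $\mathfrak h_0$ and on the complement with complexification $\mathfrak n^+\oplus\mathfrak n^-$, using that $e_{\hat\varphi_0}$ is proportional to the highest weight vector and that $i\langle\varphi_0,A\rangle=\Lambda(A)$ under the convention $\tilde\varphi_0\vert_{\mathfrak h_0}=-i\Lambda$, is exactly the standard argument.

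The gap is in (1). Knowing the diagonal values $\langle Be_{\hat\varphi},e_{\hat\varphi}\rangle_V$ on a spanning family does not determine $B$ ``by polarization'': the polarization identity recovers $\langle Ba,b\rangle_V$ from the diagonal values of the form at $a\pm b$ and $a\pm ib$, and these linear combinations of coherent states are in general not coherent states, so their diagonal values are not part of your data. For a mere spanning set the claim is simply false: on $V=\mathbb C^2$ the skew operator sending $e_1\mapsto -e_2$ and $e_2\mapsto e_1$ has vanishing diagonal matrix elements on the basis $\{e_1,e_2\}$ but is nonzero. Injectivity of $s$ genuinely uses the complex structure of $o(\varphi_0)\simeq K^c/H_0^cN^-$: in the Borel--Weil realization the coherent state depends antiholomorphically on the point, so $(\varphi,\psi)\mapsto\langle Be_{\hat\varphi},e_{\hat\psi}\rangle_V$ is separately holomorphic/antiholomorphic and is determined by its restriction to the maximally totally real diagonal $\psi=\varphi$. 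Equivalently, $g\mapsto\langle B\rho(g)v_\Lambda,\rho((g^{\ast})^{-1})v_\Lambda\rangle_V$ is holomorphic on $K^c$ and, if $s(B)=0$, vanishes on the real form $K$, hence vanishes identically; one then concludes $B=0$ because $\rho(K^c)v_\Lambda$ spans $V$. This analytic-continuation step is the one idea missing from your proof of (1).
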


In our papers \cite{CaBeit4}, \cite{CaCmuc3} and \cite{CaRimut}, we developped a
general method for constructing a Berezin quantization map associated with a unitary
representation of a quasi-Hermitian Lie group which is holomorphically induced from
a unitary irreducible representation of a maximal compactly embedded subgroup.
This construction goes as follows. 

The evaluation maps $K_z:{\mathcal F}\rightarrow V,
f\rightarrow f(z)$ are continuous \cite{Ne}, p. 539. The vector
coherent states of $\mathcal F$ are the maps
$E_z=K^{\ast}_z:V\rightarrow {\mathcal F}$ defined by $\langle
f(z),v\rangle_V=\langle f,E_zv\rangle_{\mathcal F}$ for $f\in {\mathcal F}$ and
$v\in V$. Here we have that $E_zv=e_z\otimes v$, that is, we have
$(E_zv)(w)=e^{ {\bar z}w/2\lambda}v$.

 Let ${\mathcal F}^s$ be the subspace of $\mathcal F$ generated
by the functions $e_z\otimes v$ for $z\in {\mathbb C}^n$ and $v\in V$.
Then ${\mathcal F}^s$ is a dense subspace of $\mathcal F$. Let
$\mathcal C$ be the space consisting of all operators $A$ on
$\mathcal F$ such that the domain of $A$ contains ${\mathcal F}^s$
and the domain of $A^{\ast}$ also contains ${\mathcal F}^s$. 
Then, following an idea of \cite{Kil} and \cite{AE2}, we first introduce
the pre-symbol $S_0(A)$ of $A\in {\mathcal C}$ by
\begin{equation*}
S_0(A)(z)=(E_z^{\ast}E_z)^{-1/2}E_z^{\ast}AE_z(E_z^{\ast}E_z)^{-1/2}=
e^{- z {\bar z}/2\lambda}
E_z^{\ast}AE_z.
\end{equation*}

The Berezin symbol $S(A)$ of $A$ is thus defined as the
complex-valued function on ${\mathbb C}^n\times o(\varphi_0)$ given
by
\begin{equation*}S(A)(z,\varphi)=s(S_0(A)(z))(\varphi).\end{equation*}

By applying Proposition 4.4 of \cite{CaRimut} we can see that $S$ has the following properties.

\begin{proposition} \label{proprigrand} \begin{enumerate}
\item Each $A\in \mathcal C$ is determined by $S(A)$.

\item For each $A\in {\mathcal C}$, we
have $S(A^{\ast})=\overline {S(A)}$.

\item We have $S(I_{\mathcal F})=1$.

\item For each $A\in {\mathcal C}$, $g=((z_0,{\bar
z_0}),c,k)\in G$, $z\in {\mathbb C}^n$
and $\varphi \in o(\varphi_0)$, we have
\begin{equation*}S(A)(g\cdot z,\varphi)=S(\pi (g)^{-1}A \pi (g))(z,
\Ad^{\ast} (k^{-1})\varphi).\end{equation*} 
\end{enumerate}\end{proposition}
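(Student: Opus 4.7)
The plan is to reduce each of the four statements to the analogous property of the Berezin symbol map $s$ on $V$ recorded in Proposition \ref{propripetit}, by first establishing the corresponding property for the operator-valued pre-symbol $S_0$. Since $S(A)(z,\varphi)=s(S_0(A)(z))(\varphi)$, it suffices to show: (i) $A\in\mathcal C$ is determined by the family $\{S_0(A)(z)\}_{z\in\mathbb{C}^n}$; (ii) $S_0(A^{\ast})(z)=S_0(A)(z)^{\ast}$; (iii) $S_0(I_{\mathcal F})(z)=I_V$; (iv) $S_0(\pi(g)^{-1}A\pi(g))(z)=\rho(k)^{-1}S_0(A)(g\cdot z)\rho(k)$ for $g=((z_0,\bar z_0),c_0,k)\in G$.

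Parts (ii) and (iii) are immediate calculations: (ii) follows from $E_z^{\ast}A^{\ast}E_z=(E_z^{\ast}AE_z)^{\ast}$; (iii) follows from evaluating $(E_zv)(z)=e^{|z|^2/2\lambda}v$, which shows $E_z^{\ast}E_z=e^{|z|^2/2\lambda}I_V$ and cancels the prefactor in the definition of $S_0$. Applying $s$ together with Proposition \ref{propripetit}(2) and the tautology $s(I_V)=1$ then yields parts (2) and (3) of the statement. For (1), assume $S(A)=0$; then injectivity of $s$ from Proposition \ref{propripetit}(1) gives $E_z^{\ast}AE_z=0$ for every $z\in\mathbb{C}^n$. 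The function $(z,w)\mapsto\langle(AE_zv)(w),v'\rangle_V$ is antiholomorphic in $z$ and holomorphic in $w$; its vanishing on the diagonal $w=z$ forces it to vanish identically by a standard polarization argument. Hence $AE_zv=0$ for all $z$ and $v$, so $A$ vanishes on the dense subspace $\mathcal{F}^s$, which is the sense in which $A$ is determined by its symbol.

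The main work lies in (iv). Combining the decomposition (\ref{eq:decomppi}) with the easy identity $\tau(k)e_z=e_{kz}$ (immediate from $k\in U(n)$) and the coherent state transformation $\pi_0(g_0)e_z=\overline{\alpha(g_0,z)}\,e_{g_0\cdot z}$ from Section \ref{sec:Heisen}, one obtains
\begin{equation*}
\pi(g)E_z=\overline{\alpha(g_0,kz)}\,E_{g\cdot z}\,\rho(k).
\end{equation*}
Taking adjoints (using unitarity of $\pi(g)$) gives
\begin{equation*}
E_z^{\ast}\pi(g)^{-1}A\pi(g)E_z=|\alpha(g_0,kz)|^2\,\rho(k)^{-1}E_{g\cdot z}^{\ast}AE_{g\cdot z}\,\rho(k).
\end{equation*}
The cocycle identity $|\alpha(g_0,w)|^2=e^{(|w|^2-|g_0\cdot w|^2)/2\lambda}$, which is just the unitarity of $\pi_0$ applied to $e_w$, combined with $|kz|=|z|$, makes the exponential prefactors in the definition of $S_0$ telescope exactly into $e^{-|g\cdot z|^2/2\lambda}$, yielding (iv). Then applying $s(\cdot)(\varphi)$ and invoking Proposition \ref{propripetit}(3) gives $S(\pi(g)^{-1}A\pi(g))(z,\varphi)=S(A)(g\cdot z,\Ad^{\ast}(k)\varphi)$, and the substitution $\varphi\mapsto\Ad^{\ast}(k^{-1})\varphi$ produces the stated formula. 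The only real obstacle is the book-keeping in (iv): tracking the three scalar factors (the pre-symbol weight, the cocycle $\alpha$, and the inverse weight at $g\cdot z$) and checking that they cancel precisely; this cancellation rests entirely on the unitarity of $\pi_0$.
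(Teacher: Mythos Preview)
Your proof is correct. The paper itself gives no proof here: it simply invokes Proposition~4.4 of \cite{CaRimut}, which establishes these four properties in the general framework of holomorphically induced representations of quasi-Hermitian Lie groups. Your argument, by contrast, is a direct and self-contained verification tailored to the Heisenberg motion group, exploiting the explicit tensor decomposition ${\mathcal F}={\mathcal F}_0\otimes V$, the formula $E_zv=e_z\otimes v$, and the concrete cocycle $\alpha$ from Section~\ref{sec:Heisen}. The key step you carry out---showing that $\pi(g)E_z=\overline{\alpha(g_0,kz)}\,E_{g\cdot z}\,\rho(k)$ via $\tau(k)e_z=e_{kz}$ and the known transformation law for $\pi_0$, and then checking the scalar cancellation through unitarity of $\pi_0$---is exactly the special-case computation that the cited general result packages abstractly. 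Your route is more elementary and more transparent for this particular group; the paper's route has the advantage that the same reference covers all the author's related papers at once. Both arrive at the same place by essentially the same mechanism (coherent-state covariance plus the $K$-equivariance of $s$), just at different levels of generality.
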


Moreover, we can decompose $S$ according to the decomposition ${\mathcal F}={\mathcal F}_0\otimes V$.
Let $f_0$ be a complex-valued function on ${\mathbb C}^n$ and $f_1$ be a complex-valued function on
$o(\varphi_0)$. Then we denote by $f_0\otimes f_1$ the function on ${\mathbb C}^n\times o(\varphi_0)$ defined by
$(f_0\otimes f_1)(z,\varphi)=f_0(z)f_1(\varphi)$.

\begin{proposition} \label{propdecomp} Let $A_0\in {\mathcal C}_0$ and let $A_1$ be an operator on $V$. Then $A_0\otimes A_1\in {\mathcal C}$ and we have
$S(A_0\otimes A_1)=S^0(A_0)\otimes s(A_1)$. \end{proposition}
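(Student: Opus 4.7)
The plan is to verify the domain condition for $A_0 \otimes A_1$, compute the pre-symbol $S_0(A_0 \otimes A_1)(z)$ explicitly as a scalar multiple of $A_1$ on $V$, and then apply the scalar Berezin symbol $s$ pointwise in $z$. Since $V$ is finite-dimensional, $A_1$ is defined everywhere on $V$, and $A_0 \in \mathcal{C}_0$ means that every $e_z$ belongs to the domain of $A_0$. Hence $(A_0 \otimes A_1)(e_z \otimes v) = A_0 e_z \otimes A_1 v$ is well-defined for all $z \in \mathbb{C}^n$ and $v \in V$, so $\mathcal{F}^s$ lies in the domain of $A_0 \otimes A_1$; the analogous argument for $A_0^* \otimes A_1^*$ (for which one implicitly takes $A_0^*$ to belong to $\mathcal{C}_0$ as well) gives $A_0 \otimes A_1 \in \mathcal{C}$.

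For the pre-symbol, I first identify the vector coherent states and their adjoints. From $E_z v = e_z \otimes v$, the definition of $E_z^*$ together with the reproducing property $\langle f_0, e_z\rangle_{\mathcal{F}_0} = f_0(z)$ and the tensor product inner product $\langle f_0 \otimes w, e_z \otimes v\rangle_{\mathcal{F}} = \langle f_0, e_z\rangle_{\mathcal{F}_0} \langle w, v\rangle_V$ yield the formula $E_z^*(f_0 \otimes w) = f_0(z)\, w$. Consequently,
\begin{equation*}
E_z^*(A_0 \otimes A_1) E_z v = E_z^*(A_0 e_z \otimes A_1 v) = (A_0 e_z)(z) \, A_1 v = \langle A_0 e_z, e_z\rangle_{\mathcal{F}_0} A_1 v.
\end{equation*}
Using $\langle e_z, e_z\rangle_{\mathcal{F}_0} = e^{|z|^2/2\lambda}$ together with the definition $S_0(A)(z) = e^{-|z|^2/2\lambda} E_z^* A E_z$, this becomes
\begin{equation*}
S_0(A_0 \otimes A_1)(z) = \frac{\langle A_0 e_z, e_z\rangle_{\mathcal{F}_0}}{\langle e_z, e_z\rangle_{\mathcal{F}_0}} \, A_1 = S^0(A_0)(z)\, A_1,
\end{equation*}
a scalar multiple of the fixed operator $A_1$ on $V$.

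The final step is then immediate: by linearity of the scalar Berezin symbol $s$ on $\End(V)$, and since $S^0(A_0)(z)$ is a scalar, one has
\begin{equation*}
S(A_0 \otimes A_1)(z, \varphi) = s\bigl(S^0(A_0)(z)\, A_1\bigr)(\varphi) = S^0(A_0)(z)\, s(A_1)(\varphi),
\end{equation*}
which is precisely $(S^0(A_0) \otimes s(A_1))(z, \varphi)$ by the convention introduced just before the statement. There is no substantive obstacle in this proof; the only delicate point is the identification $E_z^*(f_0 \otimes w) = f_0(z)\, w$, which bundles together the reproducing kernel property of $\mathcal{F}_0$ and the tensor product structure of the inner product on $\mathcal{F}$. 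Everything else is routine bookkeeping.
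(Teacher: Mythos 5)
Your proof is correct and is essentially the computation the paper intends (the paper states this proposition without proof): the identity $E_z^{\ast}(f_0\otimes w)=f_0(z)\,w$ reduces the pre-symbol to $S^0(A_0)(z)\,A_1$, and linearity of $s$ on $\End(V)$ finishes the argument. Your parenthetical caveat that $A_0^{\ast}$ must also have every $e_z$ in its domain is the right reading of the paper's slightly loose definition of ${\mathcal C}_0$, an assumption already implicit in the paper's earlier claim that $S^0(A^{\ast})=\overline{S^0(A)}$.
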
 

From this, we deduce the following result. We denote by $\varphi^0$ the restriction to $\mathfrak g$
of the extension of $\tilde{\varphi_0}\in ({\mathfrak k}^c)^{\ast}$ to ${\mathfrak g}^c$ which vanishes
on ${\mathfrak p}^{\pm}$. We also denote by ${\mathcal O}(\varphi^0)$ the orbit of $\varphi^0$ for the coadjoint
action of $G$.

\begin{proposition} \label{symbpi} \cite{CaRimut} \begin{enumerate} 
\item Let $g=((z_0,{\bar z_0}),c_0,k)\in G$. For each $z\in {\mathbb C}^n$ and 
$\varphi\in o(\varphi_0)$, we have 
\begin{equation*}S(\pi(g))(z,\varphi)=\exp \left(i \lambda c_0+\tfrac{1}{2}i{\bar z_0}z-\tfrac{\lambda}{4}\vert z_0\vert^2-\tfrac{1}{2\lambda}\vert z\vert^2+\tfrac{1}{2\lambda}{\bar z}k^{-1} (z+i\lambda z_0)\right)\,s(\rho(k))(\varphi).
\end{equation*} 
 \item For each $X=((a,{\bar a}),c,A)\in {\mathfrak g}$,
$z\in {\mathbb C}^n$ and 
$\varphi\in o(\varphi_0)$,  we have
\begin{equation*} S(d\pi (X))(z,\varphi)=i\lambda c+\frac{i}{2}\left( {\bar a}z+ a{\bar z}\right)
-\frac{1}{2\lambda}{\bar z}(Az)
+s(d\rho(A))(\varphi).\end{equation*}
\item For each $X=((a,{\bar a}),c,A)\in {\mathfrak g}$,
$z\in {\mathbb C}^n$ and 
$\varphi\in o(\varphi_0)$,  we have
$$S(d\pi (X))(z,\varphi)=i\langle \Phi (z,\varphi), X \rangle$$ where the map $\Phi: {\mathbb C}^n\times o(\varphi_0)\rightarrow
{\mathfrak g}^{\ast}$ is defined by
\begin{equation*}\Phi (z,\varphi)=\left(i (-z,{\bar z}), \lambda, \varphi-\tfrac{1}{2\lambda}(z,{\bar z}) 
\times (z,{\bar z})\right).\end{equation*}
Moreover $\Phi$ is a diffeomorphism from ${\mathbb C}^n\times o(\varphi_0)$ onto ${\mathcal O}(\varphi^0)$. \end{enumerate} \end{proposition}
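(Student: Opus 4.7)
The plan is to handle the three claims in order, exploiting the tensor structure \eqref{eq:decomppi} and Proposition~\ref{propdecomp} to reduce everything to the scalar computations of Section~\ref{sec:Heisen} combined with the finite-dimensional Berezin calculus of Proposition~\ref{propripetit}.

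For part (1), I would start from $\pi(g)=\pi_0(g_0)\tau(k)\otimes\rho(k)$ with $g_0=((z_0,\bar z_0),c_0)$, apply Proposition~\ref{propdecomp} to obtain
\begin{equation*}
S(\pi(g))(z,\varphi)=S^0(\pi_0(g_0)\tau(k))(z)\, s(\rho(k))(\varphi),
\end{equation*}
and then compute the scalar factor from the coherent-state formula. Using that $\tau(k)e_z=e_{kz}$ (a direct consequence of $k\in U(n)$), the formula $\pi_0(g_0)e_w=\overline{\alpha(g_0,w)}\,e_{g_0\cdot w}$ recalled in the proof of Proposition~\ref{proprBer}, and the reproducing identity $\langle e_w,e_z\rangle_{\mathcal F_0}=e^{\bar w z/2\lambda}$, an explicit calculation yields the claimed exponential factor after normalising by $\|e_z\|^2_{\mathcal F_0}=e^{|z|^2/2\lambda}$.

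For part (2), the cleanest route is a direct computation on vector coherent states rather than differentiating (1). Applying Proposition~\ref{redpi} to $f=e_z\otimes v$ and using $d(e_z)_w(Y)=(\bar z Y/2\lambda)\,e_z(w)$, one gets
\begin{equation*}
(d\pi(X)(e_z\otimes v))(w)=e^{\bar z w/2\lambda}\Bigl(d\rho(A)v+\bigl(i\lambda c+\tfrac{i}{2}\bar a w-\tfrac{1}{2\lambda}\bar z A w+\tfrac{i}{2}\bar z a\bigr)v\Bigr).
\end{equation*}
Evaluating at $w=z$ (which amounts to applying $E_z^\ast$) and dividing by $E_z^\ast E_z=e^{|z|^2/2\lambda}\,I_V$ gives $S_0(d\pi(X))(z)$; taking the scalar Berezin symbol $s$ and invoking $s(d\rho(A))(\varphi)=i\langle\varphi,A\rangle$ from Proposition~\ref{propripetit}(4) produces exactly the formula in (2).

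For part (3), the identification $S(d\pi(X))(z,\varphi)=i\langle\Phi(z,\varphi),X\rangle$ reduces to algebraic manipulation: using the definitions of $\omega$ and of $v\times u$ one checks that $\omega(i(-z,\bar z),(a,\bar a))=\tfrac12(\bar a z+a\bar z)$ and $\langle(z,\bar z)\times(z,\bar z),A\rangle=-i\bar z A z$, after which matching term by term with (2) is immediate. For the diffeomorphism statement I would argue as follows: $\Phi(0,\varphi_0)=(0,\lambda,\varphi_0)=\varphi^0$; combining part (2) with the covariance identity of Proposition~\ref{proprigrand}(4) shows that $\Phi$ intertwines the natural $G$-action on $\mathbb C^n\times o(\varphi_0)$ (which is transitive, being the product of the transitive actions on each factor) with the coadjoint action on $\mathcal O(\varphi^0)$; hence $\Phi$ maps into $\mathcal O(\varphi^0)$ and is surjective. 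Injectivity is transparent since the ${\mathfrak p}^+\oplus{\mathfrak p}^-$-component of $\Phi(z,\varphi)$ recovers $z$ and then the ${\mathfrak k}$-component recovers $\varphi$; smoothness of $\Phi$ and its inverse is clear from the explicit formulas.

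The main obstacle I expect is bookkeeping: reconciling the conventions for $\omega$, the coadjoint action of $G^c$ (with the shift by $v_0\times(k_0u-\tfrac{d}{2}v_0)$), and the two actions of $K$ on ${\mathbb C}^n\times o(\varphi_0)$ when verifying equivariance of $\Phi$. Once signs are fixed, every step is a direct calculation.
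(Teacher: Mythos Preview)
Your proposal is correct and follows precisely the route the paper indicates: the sentence ``From this, we deduce the following result'' immediately preceding the proposition refers to Proposition~\ref{propdecomp}, and your argument exploits that tensor decomposition together with Propositions~\ref{redpi} and~\ref{propripetit} exactly as intended (the paper itself gives no further proof, citing \cite{CaRimut}). The equivariance argument you give for the diffeomorphism in part~(3), via Proposition~\ref{proprigrand}(4), is the standard one and your bookkeeping with $\omega$ and $v\times u$ checks out.
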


Consider now the Berezin transforms ${\mathcal B}:=SS^{\ast}$, ${\mathcal B}^0:=S^0(S^0)^{\ast}$,
$b:=ss^{\ast}$ and the corresponding maps $U:={\mathcal B}^{-1/2}S$, $U^0:=({\mathcal B}^0)^{-1/2}S^0$ and $w:=b^{-1/2}s$. We fix a $K$-invariant measure $\nu$ on
$o(\varphi_0)$ and we endow  ${\mathbb C}^n \times o(\varphi_0)$ with the measure $\mu_{\lambda}\otimes \nu$.
Also, we consider the action of $G$ on  ${\mathbb C}^n \times o(\varphi_0)$ given by 
\begin{equation*}g\cdot (z,\varphi):=(g\cdot z, \Ad^{\ast}(k)\varphi)\end{equation*}
for $g=((z_0,{\bar z_0}),c_0,k)\in G$. Then we have the following results.

\begin{proposition} \label{UisSW} \cite{CaRimut} The map $U$ is a Stratonovich-Weyl correspondence for
$(G,\pi, {\mathbb C}^n \times o(\varphi_0))$. \end{proposition}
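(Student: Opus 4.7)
The plan is to follow the pattern of the Heisenberg-case argument (Section \ref{sec:Heisen}), exploiting the tensor product structure of $S$ supplied by Proposition \ref{propdecomp}. The isomorphism $\mathcal{F} \cong \mathcal{F}_0 \otimes V$ induces $\mathcal{L}_2(\mathcal{F}) \cong \mathcal{L}_2(\mathcal{F}_0) \otimes \mathcal{L}_2(V)$, and on decomposable operators Proposition \ref{propdecomp} gives $S(A_0 \otimes A_1) = S^0(A_0) \otimes s(A_1)$. My first step would be to show that $S$ extends to a bounded, injective operator with dense range from $\mathcal{L}_2(\mathcal{F})$ into $L^2({\mathbb C}^n \times o(\varphi_0), \mu_{\lambda} \otimes \nu)$, via the corresponding property of $S^0$ (Proposition \ref{proprBer}(5)) and its analogue for $s$ on the finite-dimensional space $\mathcal{L}_2(V)$ (standard in the compact case, cf.\ \cite{CaS}, \cite{ACGc}). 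The consequence is $\mathcal{B} = \mathcal{B}^0 \otimes b$ and $U = U^0 \otimes w$, with $U$ unitary.

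Having this, I would verify the three Stratonovich-Weyl axioms in turn. Axiom (1), $U(A^{\ast}) = \overline{U(A)}$, follows from Proposition \ref{proprigrand}(2) once one observes that $\mathcal{B}^{-1/2}$ commutes with complex conjugation on $L^2({\mathbb C}^n \times o(\varphi_0))$; indeed, the explicit positive real integral kernel of $\mathcal{B}^0$ recalled in Section \ref{sec:Heisen} and the analogous reality for $b$ propagate to $\mathcal{B}$ and its square root. For axiom (2) (covariance), I would introduce, for each $g \in G$, the unitaries $I_g(A) := \pi(g)^{-1} A \pi(g)$ on $\mathcal{L}_2(\mathcal{F})$ and $(R_g F)(z,\varphi) := F(g^{-1} \cdot (z,\varphi))$ on $L^2({\mathbb C}^n \times o(\varphi_0))$. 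Proposition \ref{proprigrand}(4) reads exactly $S \circ I_g = R_g \circ S$; taking adjoints gives $S^{\ast} R_g = I_g S^{\ast}$, whence $\mathcal{B} R_g = S I_g S^{\ast} = R_g S S^{\ast} = R_g \mathcal{B}$, so $R_g$ commutes with $\mathcal{B}^{-1/2}$ and $U \circ I_g = R_g \circ U$. Finally, axiom (3) (traciality) is a formal consequence of axiom (1) together with the unitarity of $U$:
\[
\int U(A)\,U(B)\, d(\mu_{\lambda} \otimes \nu) = \langle U(A), \overline{U(B)}\rangle = \langle U(A), U(B^{\ast})\rangle = \langle A, B^{\ast}\rangle_{\mathcal{L}_2(\mathcal{F})} = \Tr(AB).
\]

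The main obstacle I anticipate is the first step --- establishing boundedness, injectivity and dense range of $S$ on all of $\mathcal{L}_2(\mathcal{F})$, since Proposition \ref{proprigrand} is stated only for the algebraic class $\mathcal{C}$. Once this is secured through the factorization $S = S^0 \otimes s$ together with the scalar result of \cite{UU}, Proposition 1.19, the remainder of the argument is a clean transcription of the functional-analytic bookkeeping from Section \ref{sec:Heisen}; this is presumably what \cite{CaRimut} verifies in its general form.
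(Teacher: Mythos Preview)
The paper does not prove this proposition; it is quoted from \cite{CaRimut} and used as input for the later sections. So there is no ``paper's own proof'' to compare against here. Your outline follows exactly the polar-decomposition argument that the paper sketches in Section~\ref{sec:Heisen} for the Heisenberg case and attributes, in the general quasi-Hermitian setting, to \cite{CaSWD}, Proposition~3.1 and \cite{CaRimut}. In that sense your plan is the intended one.

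Two small points deserve correction. First, with your conventions $I_g(A)=\pi(g)^{-1}A\pi(g)$ and $(R_gF)(x)=F(g^{-1}\cdot x)$, Proposition~\ref{proprigrand}(4) actually gives $S\circ I_g=R_{g^{-1}}\circ S$, not $S\circ I_g=R_g\circ S$: substituting $\psi=\Ad^{\ast}(k^{-1})\varphi$ in that formula yields $S(I_g(A))(z,\psi)=S(A)(g\cdot(z,\psi))$. This is precisely the covariance condition of Definition~1.1, and the rest of your commutation argument for $\mathcal B^{-1/2}$ goes through unchanged, so the slip is harmless. Second, $S$ does \emph{not} have dense range in all of $L^2({\mathbb C}^n\times o(\varphi_0),\mu_\lambda\otimes\nu)$: since $s$ takes values in the finite-dimensional space $Sy(o(\varphi_0))$, the closure of the range of $S=S^0\otimes s$ is only $L^2({\mathbb C}^n,\mu_\lambda)\otimes Sy(o(\varphi_0))$. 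This is enough, however: Definition~1.1 only asks that $\mathcal W$ be an isomorphism onto \emph{some} vector space of functions on $M$, and the polar factor $U$ is then a unitary from $\mathcal L_2(\mathcal F)$ onto that closed subspace, which is what traciality needs. With these two adjustments your argument is complete.
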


\begin{proposition} \label{decomptb} \cite{CaRimut}  For each $f\in L^2({\mathbb C}^n \times o(\varphi_0), \mu_{\lambda}\otimes \nu)$, we have 

\begin{equation*}
{\mathcal B}(f)(z,\psi)=\int_{{\mathbb C}^n \times o(\varphi_0)} k_{\mathcal B}(z,w,\psi,
\varphi)\,f(w,\varphi)\,d\mu_{\lambda} (w)d\nu(\varphi)
\end{equation*} where
\begin{equation*} k_{\mathcal B}(z,w,\psi, \varphi):=e^{-\vert z-w\vert^2/2\lambda}\,
\frac {\vert \langle e_{\hat {\psi}},e_{\hat{\varphi}}\rangle_V\vert^2 } { \langle
e_{\hat{\varphi}},e_{\hat{\varphi}}\rangle_V\langle e_{\hat {\psi}},e_{\hat {\psi}}\rangle_V
}.\end{equation*}

In particular, for each $f_0\in  L^2({\mathbb C}^n)$ and $f_1\in Sy(o(\varphi_0))$, we have
${\mathcal B}(f_0\otimes f_1)={\mathcal B}^0(f_0)\otimes b(f_1)$.  Moreover for each $A_0$ operator on ${\mathcal F}_0$
and $A_1$ operator on $V$, we have $U(A_0\otimes A_1)=U^0(A_0)\otimes w(A_1)$.
\end{proposition}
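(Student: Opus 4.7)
The plan is to derive the integral formula for $\mathcal{B}=SS^{\ast}$ by a direct coherent-state computation, and then to read off both tensor factorizations from the resulting product form of the kernel. Everything is driven by the tensor identity $E_z v = e_z\otimes v$ for the vector coherent states of $\mathcal F$, combined with Proposition \ref{propdecomp}.

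First I would express $S^{\ast}$ as a weak integral of rank-one operators indexed by the vector coherent states. Starting from $S(A)(z,\varphi)=e^{-z\bar z/2\lambda}\langle AE_z e_{\hat{\varphi}},E_z e_{\hat{\varphi}}\rangle_{\mathcal F}/\|e_{\hat{\varphi}}\|_V^2$ and dualizing against a test function $f\in L^2(\mathbb C^n\times o(\varphi_0),\mu_{\lambda}\otimes\nu)$, I obtain
\[
S^{\ast}(f)=\int \frac{f(w,\varphi)}{\|e_w\otimes e_{\hat{\varphi}}\|_{\mathcal F}^2}\, \Pi_{w,\hat{\varphi}}\,d\mu_{\lambda}(w)\,d\nu(\varphi),
\]
where $\Pi_{w,\hat{\varphi}}$ is the rank-one operator $h\mapsto \langle h,E_w e_{\hat{\varphi}}\rangle_{\mathcal F}\,E_w e_{\hat{\varphi}}$. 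Composing with $S$ and using that $\langle E_z e_{\hat{\psi}},E_w e_{\hat{\varphi}}\rangle_{\mathcal F}$ factors as $\langle e_z,e_w\rangle_{\mathcal F_0}\langle e_{\hat{\psi}},e_{\hat{\varphi}}\rangle_V$ (and similarly for the norms), I get
\[
\mathcal B(f)(z,\psi)=\int \frac{|\langle e_z,e_w\rangle_{\mathcal F_0}|^2}{\|e_z\|_{\mathcal F_0}^2\|e_w\|_{\mathcal F_0}^2}\,\frac{|\langle e_{\hat{\psi}},e_{\hat{\varphi}}\rangle_V|^2}{\|e_{\hat{\psi}}\|_V^2\|e_{\hat{\varphi}}\|_V^2}\,f(w,\varphi)\,d\mu_{\lambda}(w)\,d\nu(\varphi).
\]
The Bargmann-Fock identities $\langle e_z,e_w\rangle_{\mathcal F_0}=e^{\bar z w/2\lambda}$ and $\|e_z\|_{\mathcal F_0}^2=e^{|z|^2/2\lambda}$ collapse the first factor to $e^{-|z-w|^2/2\lambda}$, which is precisely $k_{\mathcal B}$.

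Since $k_{\mathcal B}$ is a product of the kernel of $\mathcal B^0$ (recalled just after Proposition \ref{proprBer}) and of the compact Berezin transform $b=ss^{\ast}$, Fubini yields $\mathcal B(f_0\otimes f_1)=\mathcal B^0(f_0)\otimes b(f_1)$; equivalently $\mathcal B=\mathcal B^0\otimes b$ on $L^2(\mathbb C^n,\mu_{\lambda})\otimes L^2(o(\varphi_0),\nu)$. Positive functional calculus on tensor products then gives $\mathcal B^{-1/2}=(\mathcal B^0)^{-1/2}\otimes b^{-1/2}$, and combining with Proposition \ref{propdecomp} produces
\[
U(A_0\otimes A_1)=\mathcal B^{-1/2}\bigl(S^0(A_0)\otimes s(A_1)\bigr)=U^0(A_0)\otimes w(A_1),
\]
which is the last assertion.

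The main obstacle I anticipate is the rigorous interpretation of the rank-one-integral formula for $S^{\ast}$: since $\mathcal F^s$ is only dense in $\mathcal F$ and $\mathcal C$ contains unbounded operators, the integral must be understood weakly by pairing against vectors of $\mathcal F^s$. The cleanest route is to carry the argument out on $\mathcal L_2(\mathcal F)=\mathcal L_2(\mathcal F_0)\otimes\mathcal L_2(V)$, where $S$ is bounded by the natural analog of Proposition \ref{proprBer}(5), so that all identities are genuine Hilbert space equalities; the statements for arbitrary $f\in L^2(\mathbb C^n\times o(\varphi_0),\mu_{\lambda}\otimes\nu)$ and $A\in \mathcal C$ then follow by density and continuity.
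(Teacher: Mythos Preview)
Your argument is correct. The paper itself does not supply a proof of this proposition; it simply records the statement and cites \cite{CaRimut}. Your direct coherent-state computation---writing $S^{\ast}$ as a weak integral of the rank-one projections onto $e_w\otimes e_{\hat{\varphi}}$, composing with $S$, and then exploiting the factorization $\langle e_z\otimes e_{\hat{\psi}},e_w\otimes e_{\hat{\varphi}}\rangle_{\mathcal F}=\langle e_z,e_w\rangle_{\mathcal F_0}\langle e_{\hat{\psi}},e_{\hat{\varphi}}\rangle_V$---is exactly the standard derivation one would expect and yields the kernel $k_{\mathcal B}$ with no gaps. The passage from $\mathcal B=\mathcal B^0\otimes b$ to $\mathcal B^{-1/2}=(\mathcal B^0)^{-1/2}\otimes b^{-1/2}$ via functional calculus for commuting positive operators, combined with Proposition~\ref{propdecomp}, then gives $U=U^0\otimes w$ as you claim. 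Your remark that the clean setting for the operator identities is $\mathcal L_2(\mathcal F)=\mathcal L_2(\mathcal F_0)\otimes\mathcal L_2(V)$, where $S$ is bounded, is the right way to handle the domain issues.
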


Note that it is well-known that if ${\Delta}:=4\sum_{k=1}^n  (\partial_{z_k}  \partial_{\bar z_k} )$ is the Laplace operator then we have ${\mathcal B}^0=\exp (\lambda{\Delta}/2)$, see \cite{Luo}. Thus we get 
$U^0=\exp (-\lambda{\Delta}/4)S^0$. Hence, by applying Proposition \ref{symbpi} and Proposition \ref{decomptb}, we obtain the following result.

\begin{proposition} \label{propswdpi} \cite{CaRimut}  For each $X=((a,{\bar a}),c, A)\in {\mathfrak g}$, $z\in {\mathbb C}^n$ and $\varphi \in o(\varphi_0)$, we have
\begin{equation*} U(d\pi(X))(z,\varphi)= ic\lambda +w(d\rho(A))(\varphi)+\frac{1}{2}\Tr (A)+
\frac{i}{2}\left( {\bar a}z+a{\bar z}\right)
-\frac{1}{2\lambda}{\bar z}(Az).\end{equation*}\end{proposition}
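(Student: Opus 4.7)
The idea is to start from the explicit symbol formula given in Proposition~\ref{symbpi}(2) and apply $\mathcal{B}^{-1/2}$ using the tensor factorization recorded in Proposition~\ref{decomptb}. First I write
\[S(d\pi(X)) = h\otimes 1 + 1\otimes s(d\rho(A)),\]
where $h(z) := i\lambda c + \tfrac{i}{2}(\bar a z + a\bar z) - \tfrac{1}{2\lambda}\bar z(Az)$ depends only on $z$, so the two summands live naturally in the two factors of $L^2(\mathbb{C}^n\times o(\varphi_0), \mu_{\lambda}\otimes\nu)$.

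Next, since $\mathcal{B}$ acts on simple tensors as $\mathcal{B}^0\otimes b$ by Proposition~\ref{decomptb}, the same factorization holds for $\mathcal{B}^{-1/2}$. Both $(\mathcal{B}^0)^{-1/2}$ and $b^{-1/2}$ fix the constant function $1$: for the Heisenberg factor this is immediate from $\mathcal{B}^0=\exp(\lambda\Delta/2)$, while for the compact factor it follows from $s(I_V)=1$ together with the $K$-equivariance of $b$, which forces $b$ to preserve the space of constants on the compact orbit $o(\varphi_0)$. Applying $\mathcal{B}^{-1/2}$ therefore yields
\[U(d\pi(X))(z,\varphi) = \exp\bigl(-\tfrac{\lambda}{4}\Delta\bigr)(h)(z) + w(d\rho(A))(\varphi).\]

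It remains to compute $\exp(-\tfrac{\lambda}{4}\Delta)(h)$. With $\Delta=4\sum_l\partial_{z_l}\partial_{\bar z_l}$, the constant term $i\lambda c$, the holomorphic term $\tfrac{i}{2}\bar a z$, and the antiholomorphic term $\tfrac{i}{2}a\bar z$ are all annihilated by $\Delta$, hence unchanged. Only the mixed quadratic term needs work: a direct computation gives
\[\Delta\bigl(\bar z(Az)\bigr) = 4\sum_l \partial_{z_l}\partial_{\bar z_l}\Bigl(\sum_{j,k}A_{jk}\bar z_j z_k\Bigr) = 4\Tr(A),\]
which is a constant. Consequently $\Delta^2$ vanishes on $\bar z(Az)$ and the exponential series truncates at first order, producing
\[\exp\bigl(-\tfrac{\lambda}{4}\Delta\bigr)\bigl(-\tfrac{1}{2\lambda}\bar z(Az)\bigr) = -\tfrac{1}{2\lambda}\bar z(Az) + \tfrac{1}{2}\Tr(A).\]
Adding the untouched terms yields the claimed expression. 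The only point that genuinely requires an argument is the fact that $b^{-1/2}$ fixes constants; aside from this, the statement reduces to the Laplacian calculation above, and the correction $\tfrac{1}{2}\Tr(A)$ emerges precisely from the trace produced when $\Delta$ hits $\bar z(Az)$.
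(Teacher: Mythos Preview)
Your proof is correct and follows precisely the route the paper sketches in the sentence preceding the proposition: take the Berezin symbol from Proposition~\ref{symbpi}(2), factor $\mathcal{B}^{-1/2}=(\mathcal{B}^0)^{-1/2}\otimes b^{-1/2}$ via Proposition~\ref{decomptb}, and apply $(\mathcal{B}^0)^{-1/2}=\exp(-\lambda\Delta/4)$ term by term, the identity $\Delta(\bar z(Az))=4\Tr(A)$ producing the trace correction. The one soft spot you yourself flag is real but minor: $K$-equivariance only shows that $b$ preserves the one-dimensional space of constants, not that $b(1)=1$; the latter is a normalization of $\nu$ (equivalent to the resolution of identity $\int_{o(\varphi_0)}\langle e_{\hat\varphi},e_{\hat\varphi}\rangle_V^{-1}\,|e_{\hat\varphi}\rangle\langle e_{\hat\varphi}|\,d\nu(\varphi)=I_V$), which the paper leaves implicit as well.
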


\section {Schr\"odinger model for Heisenberg motion groups} \label{sec:Sch}

Here we introduce the Schr\"odinger representations of $G$ by using a Segal-Bargmann transform
which is obtained by a slight modification of $B_0$. More precisely, let us define the map $B$ from  $L^2({\mathbb R}^n,V)\cong  L^2({\mathbb R}^n)\otimes V$ to ${\mathcal F}\cong {\mathcal F}_0 \otimes V$ by $B:=B_0\otimes I_V$ or, equivalently, by the integral formula

\begin{equation*}B(f)(z)=(\lambda/ \pi)^{n/4}\,\int_{{\mathbb R}^n}\,e^{
(1/4\lambda)z^2+ixz-(\lambda/2)x^2}\,f(x)\,dx\end{equation*}
for each $f\in L^2({\mathbb R}^n,V)$.

Now, by analogy with the case of the Heisenberg group, we define the Schr\"odinger representation $\sigma$ of $G$
on $L^2({\mathbb R}^n,V)$ by $\sigma (g):=B^{-1}\pi(g)B$. Similarly, recalling that $\tau$ is the representation of $K$ on ${\mathcal F}_0$ given by $(\tau(k)F)(z)=F(k^{-1}z)$, we define the representation $\tilde{\tau}$ of $K$ on $ L^2({\mathbb R}^n,V)$ by $\tilde{\tau}:=B_0^{-1}\tau(k)B_0$. Then we have the following proposition.

\begin{proposition} \label{propdecsigma} Let $g_0\in G_0$, $k\in K$ and $g=(g_0,k)\in G$. Then we have
$\sigma (g)=\sigma_0 (g_0)\tilde{\tau}(k)\otimes \rho(k)$. \end{proposition}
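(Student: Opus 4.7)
The proof is essentially a bookkeeping calculation that combines the tensor product decomposition of $\pi$ already recorded in Eq.~(4.2) (the formula just after Proposition~\ref{redpi}, noted to coincide with Formula~(3.18) of \cite{BJLR}) with the tensor product definitions $B = B_0 \otimes I_V$ and $\tilde\tau(k) = B_0^{-1}\tau(k)B_0$. My plan is to start from the definition $\sigma(g) := B^{-1}\pi(g)B$, replace $\pi(g)$ by $\pi_0(g_0)\tau(k)\otimes \rho(k)$, and then push $B^{-1}$ and $B$ through the tensor product.

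Concretely, writing $g = ((z_0,\bar{z_0}),c_0,k)$ and $g_0 = ((z_0,\bar{z_0}),c_0)\in G_0$, Eq.~(4.2) gives
\begin{equation*}
\pi(g) = \pi_0(g_0)\tau(k)\otimes \rho(k).
\end{equation*}
Since $B = B_0 \otimes I_V$ is unitary, $B^{-1} = B_0^{-1}\otimes I_V$, and using the product rule $(A_1\otimes A_2)(C_1 \otimes C_2) = A_1C_1 \otimes A_2 C_2$ on both sides, I obtain
\begin{equation*}
\sigma(g) = (B_0^{-1}\otimes I_V)\bigl(\pi_0(g_0)\tau(k)\otimes \rho(k)\bigr)(B_0\otimes I_V) = B_0^{-1}\pi_0(g_0)\tau(k) B_0 \otimes \rho(k).
\end{equation*}

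Finally, I insert $B_0 B_0^{-1} = I_{\mathcal F_0}$ between $\pi_0(g_0)$ and $\tau(k)$ to split the first tensor factor:
\begin{equation*}
B_0^{-1}\pi_0(g_0)\tau(k)B_0 = \bigl(B_0^{-1}\pi_0(g_0)B_0\bigr)\bigl(B_0^{-1}\tau(k)B_0\bigr) = \sigma_0(g_0)\tilde\tau(k),
\end{equation*}
where the identification $B_0^{-1}\pi_0(g_0)B_0 = \sigma_0(g_0)$ is exactly the intertwining property of the Segal-Bargmann transform recalled in Section~\ref{sec:Heisen}, and $B_0^{-1}\tau(k)B_0 = \tilde\tau(k)$ is the definition of $\tilde\tau$. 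Combining these yields the desired identity $\sigma(g) = \sigma_0(g_0)\tilde\tau(k)\otimes \rho(k)$. There is no serious obstacle; the only thing to be careful about is that the tensor factorization of $\pi(g)$ depends on the particular choice of $\pi$ (as opposed to $\tilde\pi$) made just before Proposition~\ref{redpi}, which is precisely what makes the restriction of $\pi$ to $G_0$ equal to $\pi_0$ and thereby makes the computation go through cleanly.
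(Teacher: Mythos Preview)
Your proof is correct and follows essentially the same route as the paper's own argument: both use Eq.~\eqref{eq:decomppi} to write $\pi(g)=\pi_0(g_0)\tau(k)\otimes\rho(k)$, conjugate by $B=B_0\otimes I_V$, and then invoke the intertwining relation $B_0^{-1}\pi_0(g_0)B_0=\sigma_0(g_0)$ together with the definition of $\tilde\tau$. The only cosmetic difference is that the paper applies the operators to an elementary tensor $f_0\otimes v$, whereas you carry out the computation directly at the operator level.
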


\begin{proof} Let $f_0\in L^2({\mathbb R}^n)$ and $v\in V$. Then by Eq. \ref{eq:decomppi} we have
\begin{align*}\sigma (g)(f_0\otimes v)&=(B_0^{-1}\otimes I_V)(\pi_0(g_0)\tau(k)\otimes \rho(k)) (B_0\otimes I_V)(f_0\otimes v)\\
&=(B_0^{-1}\pi_0(g_0)\tau(k)B_0)f_0\otimes \rho(k)v \\
&=\sigma_0 (g_0)(B_0^{-1}\tau(k)B_0)f_0 \otimes \rho(k)v,\\
\end{align*}
hence the result. \end{proof}

The following proposition gives an explicit expression for $d\sigma(X)$ when $X$ is of the form $((0,0),0,A)$
where $A\in {\mathfrak k}$.

\begin{proposition} \label{propdtau} \begin{enumerate}
\item For each $A=(a_{kl})\in {\mathfrak k}$, we have 
\begin{equation*}d\tilde{\tau}(A)=\frac{1}{2\lambda}\sum_{k,l}a_{kl}\frac{\partial^2}{\partial x_k\partial x_l}
+\frac{1}{2}\sum_{k,l}a_{kl}\left(x_k\frac{\partial}{\partial x_l}-x_l\frac{\partial}{\partial x_k}\right)
-\frac{\lambda}{2}x(Ax)+\frac{1}{2}\Tr (A). \end{equation*} 
\item For each $X=((0,0),0,A)$ with $A\in {\mathfrak k}$, we have
\begin{equation*}d\sigma(X)=d\tilde{\tau}(A)\otimes I_V+I_{{\mathcal F}_0}\otimes d\rho(A)\end{equation*}
where $d\tilde{\tau}(A)$ is as in (1).
\end{enumerate}\end{proposition}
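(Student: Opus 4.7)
The plan for part (1) is to compute $d\tau(A)$ explicitly on ${\mathcal F}_0$ and then conjugate by $B_0$, using the formulas relating the Fock and Schr\"odinger realizations of the Heisenberg group that are collected in Section \ref{sec:Heisen}. Part (2) will then follow immediately from Proposition \ref{propdecsigma} by the Leibniz rule applied to a tensor product.

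Concretely, I would first start from the definition $(\tau(k)F)(z)=F(k^{-1}z)$ and differentiate along $\exp(tA)$ at $t=0$ to obtain
\begin{equation*}
d\tau(A)F(z)=-\sum_{k,l}a_{kl}\,z_l\,\partial_{z_k}F(z).
\end{equation*}
The core of the argument is to transfer this operator to $L^2({\mathbb R}^n)$ via $B_0$. The combinations $d\pi_0(X_k)+id\pi_0(Y_k)=iz_k$ and $d\pi_0(X_k)-id\pi_0(Y_k)=2i\lambda\,\partial_{z_k}$, read off from the formulas for $d\pi_0$ in Section \ref{sec:Heisen}, let one express multiplication by $z_k$ and the derivative $\partial_{z_k}$ on ${\mathcal F}_0$ in terms of $d\pi_0$. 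Combined with the corresponding formulas for $d\sigma_0$ and the intertwining property $B_0^{-1}d\pi_0(X)B_0=d\sigma_0(X)$, this gives
\begin{equation*}
B_0^{-1} z_l\, B_0 = i\partial_l - i\lambda x_l, \qquad B_0^{-1} \partial_{z_k}\, B_0 = \tfrac{i}{2\lambda}\partial_k + \tfrac{i}{2}x_k.
\end{equation*}
I would then substitute these expressions into
\begin{equation*}
d\tilde{\tau}(A)=-\sum_{k,l}a_{kl}\,(B_0^{-1}z_l B_0)\,(B_0^{-1}\partial_{z_k}B_0),
\end{equation*}
expand the product, and reorder factors using $[\partial_l,x_k]=\delta_{kl}$. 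The four terms of the stated formula should then appear directly: the second-order part $\tfrac{1}{2\lambda}\sum a_{kl}\partial_k\partial_l$, the skew-symmetric first-order ``rotation'' piece, the quadratic potential $-\tfrac{\lambda}{2}x(Ax)$ (using $\sum_{k,l}a_{kl}x_lx_k=x(Ax)$), and the constant $\tfrac{1}{2}\Tr(A)$, which arises from the commutator $\delta_{kl}$ summed against $a_{kl}$.

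For part (2), I would take $g=((0,0),0,\exp(tA))$ in Proposition \ref{propdecsigma}. The triviality of the $G_0$-component gives $\sigma(g)=\tilde{\tau}(\exp(tA))\otimes\rho(\exp(tA))$, and differentiating this tensor product at $t=0$ via the Leibniz rule produces $d\sigma(X)=d\tilde{\tau}(A)\otimes I_V+I_{{\mathcal F}_0}\otimes d\rho(A)$, as claimed.

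The main obstacle I anticipate is purely computational: the bookkeeping in part (1) requires careful tracking of signs and factors of $i$ in the expressions for $B_0^{-1}z_l B_0$ and $B_0^{-1}\partial_{z_k}B_0$, and the commutator reordering of $x_k\partial_l$ versus $\partial_l x_k$ is exactly what produces the trace term with the correct sign. Once these are handled, the rest of the argument is formal.
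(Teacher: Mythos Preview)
Your proposal is correct. For part (2) you do exactly what the paper does: differentiate the tensor decomposition of Proposition \ref{propdecsigma}.

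For part (1) your route is genuinely different from the paper's. The paper computes $(d\tau(A)B_0f_0)(z)$ by differentiating the Segal--Bargmann kernel $k_{B_0}(z,x)$ in $z$, obtaining
\[
(d\tau(A)B_0f_0)(z)=-\int_{\mathbb R^n}\Bigl(\tfrac{1}{2\lambda}z(Az)+ix(Az)\Bigr)k_{B_0}(z,x)\,f_0(x)\,dx,
\]
and then converts each factor $z_k$ hitting the kernel into an $x$-derivative via $z_k e^{ixz}=-i\partial_{x_k}e^{ixz}$, integrating by parts against $e^{-\lambda x^2/2}f_0(x)$. Your approach is instead purely algebraic: you express multiplication by $z_l$ and the derivation $\partial_{z_k}$ on $\mathcal F_0$ as complex linear combinations of $d\pi_0(X_k)$ and $d\pi_0(Y_k)$, then invoke the intertwining property $B_0^{-1}d\pi_0=d\sigma_0\,B_0^{-1}$ to transfer them to $L^2(\mathbb R^n)$, and finally expand the product. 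This avoids the explicit kernel and the integration by parts altogether; it also makes transparent why the trace term appears, namely from the single commutator $[\partial_l,x_k]=\delta_{kl}$. The paper's method is more self-contained (it does not rely on the $d\pi_0/d\sigma_0$ formulas beyond the kernel), while yours is cleaner once those formulas are available and generalizes more readily to other creation/annihilation-type operators.
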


\begin{proof} In order to prove the first statement, first note that for each $A\in {\mathfrak k}$ and $F^0\in {\mathcal F}_0$ we have
\begin{equation*}(d\tau(A)F^0)(z)=-(dF^0)_z(Az)=-\sum_k \frac{\partial F^0}{\partial z_k}(z)(e_k(Az)).\end{equation*}

To simplify the notation we denote by $k_{B_0}(z,x)$ the kernel of $B_0$, that is, 
\begin{equation*}k_{B_0}(z,x):=\left(\frac{\lambda}{\pi}\right)^{n/4}e^{
(1/4\lambda){ z}^2+ix{ z}-(\lambda/2)x^2}.\end{equation*}

Then, for each $f_0\in {\mathcal S}({\mathbb R}^n)$ we have
\begin{equation*} (d\tau (A)B_0f_0)(z)=-\int_{{\mathbb R}^n}\, \left(\frac{1}{2\lambda}z(Az)+ix(Az)\right)k_{B_0}(z,x)f_0(x)dx. \end{equation*}

Thus writing $z(Az)=\sum_{k,l}a_{kl}z_kz_l$ and integrating by parts, we get
\begin{equation*}\int_{{\mathbb R}^n}\, z(Az) k_{B_0}(z,x)f_0(x)dx=-\left(\frac{\lambda}{\pi}\right)^{n/4}
\sum_{k,l}a_{kl}\int_{{\mathbb R}^n}\,e^{
(1/4\lambda){z}^2+ix{z}}\frac{\partial^2}{\partial x_k\partial x_l}(e^{-(\lambda/2)x^2}f_0(x))dx
\end{equation*}
and, similarly,
\begin{equation*}\int_{{\mathbb R}^n}\, ix(Az) k_{B_0}(z,x)f_0(x)dx
=-\left(\frac{\lambda}{\pi}\right)^{n/4}
\sum_{k,l}a_{kl}\int_{{\mathbb R}^n}\,e^{
(1/4\lambda){z}^2+ix{ z}}\frac{\partial}{\partial x_l}(e^{-(\lambda/2)x^2}x_kf_0(x))dx.
\end{equation*} The first statement hence follows. The second statement is an immediate consequence of Proposition
\ref{propdecsigma} .
\end{proof}

Note that $\sigma$ is completely determined by the fact that $\sigma (g_0, I_n)=\sigma_0(g_0)\otimes I_V$ and by
Proposition \ref{propdtau}.

\section {Stratonovich-Weyl correspondence via Weyl calculus} \label{sec:SWC}

In this section we first introduce a slight modification of the usual Weyl correspondence
in the spirit of our previous works, see for instance \cite{CaWQ}.

Recall that the Berezin calculus $s$ associates with each operator $B$ on $V$
a complex-valued function $s(B)$ on $o({\varphi_0})$ which is called the symbol of $B$ and that the space of all such symbols is denoted by $Sy(o({\varphi_0}))$, see Section \ref{sec:swcb}.
Then the unitary part $w$ of $s$ is an isomorphism
from $\End (V)$ onto $Sy(o({\varphi_0}))$.

Now we say that a complex-valued smooth function $f:\,(p,q,\varphi)\rightarrow
f(p,q,\varphi)$ is a symbol on $ {\mathbb R}^{2n}\times
o(\varphi_0)$ if for each $(p,q)\in{\mathbb R}^{2n}$ the function
$f(p,q,\cdot):\varphi \rightarrow f(p,q,\varphi)$ is an element of  $Sy(o({\varphi_0}))$.
In that case, we denote ${\hat f}(p,q):=w^{-1}(f(p,q,\cdot))$.
A symbol $f$ on ${\mathbb R}^{2n}\times
o(\varphi_0)$
is called an {\it S}-symbol if the function $\hat {f}$ belongs to
the Schwartz space ${\mathcal S}(\mathbb R^{2n},\End(V))$ of rapidly decreasing smooth functions on ${\mathbb R}^{2n}$ with values in $\End (V)$. For each
{\it S}-symbol on ${\mathbb R}^{2n}\times
o(\varphi_0)$, we define
the operator $ W(f)$ on the Hilbert space $L^2(\mathbb R^n,V)$ by
 \begin{equation*}W(f)\phi
(p)={(2\pi)}^{-n}\,\int_{{\mathbb R}^{2n}}\, e^{isq} \,{\hat f}( p+(1/2)s,
q)\,\phi (p+s)\,ds\,dq.
\end{equation*}

Of course, $W$ can be extended to much larger
classes of symbols as the usual Weyl calculus, see Section \ref{sec:Heisen}.
As an immediate consequence of the definition of $W$, we have the following proposition.

\begin{proposition} \label{propDecompW} \begin{enumerate}\item $W$ is a unitary operator from $L^2(\mathbb R^{2n},V)$ onto ${\mathcal L}_2(L^2(\mathbb R^{n},V))$;
\item For each $f_0\in {\mathcal S}(\mathbb R^n)$ and $f_1\in Sy(o({\varphi_0}))$, we have
$W(f_0\otimes f_1)=W_0(f_0)\otimes w^{-1}(f_1)$.
\end{enumerate}\end{proposition}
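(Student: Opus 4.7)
The plan is to prove part (2) first by direct computation and then deduce part (1) as a corollary, leveraging the scalar Weyl theory recalled in Section \ref{sec:Heisen} together with the unitarity of $w$.

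For part (2), I would start with a pure tensor symbol $f=f_0\otimes f_1$ and use linearity of $w^{-1}$ to write
\[
\hat{f}(p,q)=w^{-1}\bigl(f_0(p,q)\,f_1\bigr)=f_0(p,q)\,w^{-1}(f_1),
\]
an $\End(V)$-valued function on $\mathbb{R}^{2n}$ that factors as a scalar function times a fixed operator. Plugging this into the defining integral for $W$, the operator $w^{-1}(f_1)$ pulls out of the integral since it depends on neither $s$ nor $q$:
\[
W(f_0\otimes f_1)\phi(p)=w^{-1}(f_1)\cdot (2\pi)^{-n}\int_{\mathbb{R}^{2n}} e^{isq}\,f_0(p+\tfrac{1}{2}s,q)\,\phi(p+s)\,ds\,dq.
\]
Testing on a pure tensor $\phi=\psi\otimes v$ with $\psi\in L^2(\mathbb{R}^n)$ and $v\in V$, the vector $v$ separates out of the integral and the remaining scalar integral is precisely $W_0(f_0)\psi(p)$. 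Hence $W(f_0\otimes f_1)(\psi\otimes v)=W_0(f_0)\psi\otimes w^{-1}(f_1)v$, which matches the action of $W_0(f_0)\otimes w^{-1}(f_1)$ on pure tensors and extends by bilinearity.

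For part (1), I would assemble three ingredients: (a) the classical fact that $W_0$ is unitary from $L^2(\mathbb{R}^{2n})$ onto ${\mathcal L}_2(L^2(\mathbb{R}^n))$; (b) the fact that $w$ is the unitary part in the polar decomposition of $s$, so that $w^{-1}$ maps $Sy(o(\varphi_0))$ unitarily onto $\End(V)$ equipped with the Hilbert-Schmidt norm; (c) the canonical identification ${\mathcal L}_2(L^2(\mathbb{R}^n,V))\cong {\mathcal L}_2(L^2(\mathbb{R}^n))\otimes \End(V)$, valid because $V$ is finite-dimensional. The map $f\mapsto \hat{f}$ identifies the Hilbert space of symbols with $L^2(\mathbb{R}^{2n})\otimes Sy(o(\varphi_0))$, inside which the pure tensors $f_0\otimes f_1$ form a total set. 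By part (2) and continuity, $W$ coincides with the tensor product $W_0\otimes w^{-1}$ on the dense subspace of $S$-symbols, and since the latter is unitary as a tensor product of unitaries, $W$ extends to a unitary between the stated Hilbert spaces.

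The main obstacle I expect is purely a bookkeeping one: fixing the intended source Hilbert space (essentially the completion of $S$-symbols under the norm transported from $L^2(\mathbb{R}^{2n})\otimes \End(V)$ via $f\mapsto \hat{f}$) so that the two tensor-product factorizations align cleanly with the isomorphism ${\mathcal L}_2(L^2(\mathbb{R}^n,V))\cong {\mathcal L}_2(L^2(\mathbb{R}^n))\otimes \End(V)$. Once this identification is pinned down, both assertions reduce to the scalar case of Section \ref{sec:Heisen} combined with the unitarity of $w$, with no further analytic input required.
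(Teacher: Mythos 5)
Your argument is correct and is exactly the computation the paper has in mind: the paper offers no written proof, stating the proposition as ``an immediate consequence of the definition of $W$,'' and your expansion (linearity of $w^{-1}$ to get $\hat f = f_0\, w^{-1}(f_1)$, reduction to the scalar Weyl--Plancherel theorem for $W_0$, unitarity of $w$, and the identification ${\mathcal L}_2(L^2(\mathbb R^n)\otimes V)\cong {\mathcal L}_2(L^2(\mathbb R^n))\otimes \End(V)$ for finite-dimensional $V$) supplies precisely the omitted details. Your remark about pinning down the source Hilbert space via $f\mapsto \hat f$ is also apt, since the paper's ``$L^2(\mathbb R^{2n},V)$'' should be read as the completion of the $S$-symbols, i.e.\ $L^2(\mathbb R^{2n},\tilde\mu)\otimes Sy(o(\varphi_0))$.
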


In order to compare $W$ and $U$, it is convenient to transfer $U$ to operators on $L^2(\mathbb R^n,V)$
in the spirit of Proposition \ref{propLuo}. First, for any operator $A$ on $L^2(\mathbb R^n,V)$, we define
$S_1(A):=S(BAB^{-1})$. Clearly, one has $S_1S_1^{\ast}=SS^{\ast}={\mathcal B}$. Then the unitary part
$U_1$ of $S_1$ is given by $U_1(A):=U(BAB^{-1})$ for any operator $A$ on $L^2(\mathbb R^n,V)$.
Moreover, we have
\begin{equation}\label{eq:decU1} U_1={\mathcal B}^{-1/2}S_1=
\left(({\mathcal B^0})^{-1/2}\otimes b^{-1/2}\right)\left(S^1\otimes s\right)
=({\mathcal B^0})^{-1/2}S^1\otimes b^{-1/2}s=U^1\otimes w
\end{equation} 
with obvious notation. Hence we are in position to extend Proposition  \ref{propLuo} to Heisenberg motion groups.

\begin{proposition} \label{propLuobis} We have $U_1=(J^{-1}\otimes I_{Sy(o({\varphi_0}))})W^{-1}$.\end{proposition}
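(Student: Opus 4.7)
The plan is to reduce the identity to the scalar case already treated in Proposition \ref{propLuo}, by exploiting the tensor product structure induced by the decompositions $L^2(\mathbb{R}^n, V)\cong L^2(\mathbb{R}^n)\otimes V$ and $\mathcal{F}\cong \mathcal{F}_0\otimes V$. The crucial inputs are Eq. (\ref{eq:decU1}), which gives $U_1=U^1\otimes w$, and Proposition \ref{propDecompW}(2), which yields an analogous decomposition for $W$.

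First I would record the decomposition of $W^{-1}$ on elementary tensors. Given $A_0\in \mathcal{L}_2(L^2(\mathbb{R}^n))$ and $A_1\in \End(V)$, set $f_0:=W_0^{-1}(A_0)$ and $f_1:=w(A_1)$. Then Proposition \ref{propDecompW}(2) gives $W(f_0\otimes f_1)=W_0(f_0)\otimes w^{-1}(f_1)=A_0\otimes A_1$, hence
$$W^{-1}(A_0\otimes A_1)=W_0^{-1}(A_0)\otimes w(A_1).$$
Applying $J^{-1}\otimes I_{Sy(o(\varphi_0))}$ to both sides and invoking Proposition \ref{propLuo} (which asserts $U^1=J^{-1}W_0^{-1}$), I obtain
$$(J^{-1}\otimes I_{Sy(o(\varphi_0))})W^{-1}(A_0\otimes A_1)=U^1(A_0)\otimes w(A_1).$$
By Eq. (\ref{eq:decU1}), the right-hand side is exactly $U_1(A_0\otimes A_1)$. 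So the two maps agree on every elementary tensor.

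Finally, I would extend the identity to all of $\mathcal{L}_2(L^2(\mathbb{R}^n, V))$ by continuity. Both $U_1$ and $(J^{-1}\otimes I_{Sy(o(\varphi_0))})W^{-1}$ are bounded, indeed unitary: the former by Proposition \ref{UisSW} together with the unitarity of conjugation by $B$, the latter by Proposition \ref{propLuo} combined with Proposition \ref{propDecompW}(1). Since finite linear combinations of elementary tensors are dense in $\mathcal{L}_2(L^2(\mathbb{R}^n, V))$, the equality extends to the whole Hilbert space.

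I do not anticipate any real obstacle here; the argument is a routine tensor-factor bookkeeping combined with density. The only point that requires a moment's care is the verification of the explicit formula for $W^{-1}$ on elementary tensors, which follows immediately from Proposition \ref{propDecompW}(2) once one identifies $W_0^{-1}(A_0)$ and $w(A_1)$ as the correct preimages.
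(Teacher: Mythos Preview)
Your proof is correct and follows essentially the same approach as the paper. The paper's argument is the one-line tensor identity
\[
(J^{-1}\otimes I_{Sy(o(\varphi_0))})W^{-1}=(J^{-1}\otimes I_{Sy(o(\varphi_0))})(W_0^{-1}\otimes w)=(J^{-1}W_0^{-1})\otimes w=U^1\otimes w=U_1,
\]
using exactly the same inputs (Proposition \ref{propDecompW}, Proposition \ref{propLuo}, Eq.~\eqref{eq:decU1}); you simply make the density step behind the identification $W^{-1}=W_0^{-1}\otimes w$ explicit by checking on elementary tensors first.
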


\begin{proof} By Proposition \ref{propDecompW}, Proposition \ref{propLuo} and Eq. \ref{eq:decU1},
we have 
\begin{equation*}(J^{-1}\otimes I_{Sy(o({\varphi_0}))})W^{-1}=(J^{-1}\otimes I_{Sy(o({\varphi_0}))})(W_0^{-1}\otimes w)=(J^{-1}W_0^{-1})\otimes w=U^1\otimes w=U_1.\end{equation*} This is the desired result. \end{proof}

Now consider the action of $G$ on $\mathbb R^{2n} \times o(\varphi_0)$ given by
\begin{equation*}g\cdot (p,q,\varphi):=(j^{-1}(g\cdot j(p,q)), \Ad^{\ast}(k)\varphi)\end{equation*}
for $g=((z_0,{\bar z_0}),c_0,k)\in G$. Then we have the following result.

\begin{proposition}\label{corol} \begin{enumerate} \item $W^{-1}$ is a Stratonovich-Weyl correspondence for $(G, \sigma, \mathbb R^{2n} \times o(\varphi_0))$.
\item  For each $X=((a,{\bar a}),c,A)\in {\mathfrak g}$,
$p,q\in {\mathbb R}^n$ and 
$\varphi\in o(\varphi_0)$,  we have
\begin{align*} W^{-1}(d\sigma(X))&(p,q,\varphi)=i\lambda c+\frac{1}{2}\Tr (A)+\frac{i}{2}\left( {\bar a} j(p,q)+ a\overline{ j(p,q)}\right)\\
&-\frac{1}{2\lambda}{\overline { j(p,q)}}(A{ j(p,q)})
+w(d\rho(A))(\varphi).\end{align*}\end{enumerate}\end{proposition}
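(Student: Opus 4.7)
The plan is to deduce Proposition \ref{corol} from the combination of two previously established results: Proposition \ref{UisSW}, stating that $U$ is a Stratonovich-Weyl correspondence for $(G,\pi,\mathbb{C}^n\times o(\varphi_0))$, and Proposition \ref{propLuobis}, identifying $W^{-1}$ with $(J\otimes I_{Sy(o(\varphi_0))})U_1$. The whole argument then reduces to transferring the three Stratonovich-Weyl properties along two successive unitary conjugations: first by $B$, which passes from $\pi$ on $\mathcal F$ to $\sigma$ on $L^2(\mathbb R^n,V)$ and carries $U$ to $U_1$; then by $J\otimes I_{Sy(o(\varphi_0))}$, which passes from $L^2(\mathbb C^n\times o(\varphi_0),\mu_\lambda\otimes\nu)$ to $L^2(\mathbb R^{2n}\times o(\varphi_0),\tilde\mu\otimes\nu)$ and carries $U_1$ to $W^{-1}$.

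For assertion (1), I first observe that $U_1(A)=U(BAB^{-1})$ automatically satisfies the conjugation and traciality properties, since $A\mapsto BAB^{-1}$ is a $\ast$-isomorphism and a Hilbert-Schmidt isometry; covariance for $\sigma$ then follows from covariance for $\pi$ together with the intertwining relation $B\sigma(g)B^{-1}=\pi(g)$, giving
\begin{equation*}U_1(\sigma(g)A\sigma(g)^{-1})(z,\varphi)=U(\pi(g)\,BAB^{-1}\,\pi(g)^{-1})(z,\varphi)=U_1(A)(g^{-1}\cdot(z,\varphi)).\end{equation*}
Next, Proposition \ref{propLuobis} reads $W^{-1}=(J\otimes I_{Sy(o(\varphi_0))})U_1$, and $J\otimes I_{Sy(o(\varphi_0))}$ is a unitary from $L^2(\mathbb C^n\times o(\varphi_0),\mu_\lambda\otimes\nu)$ onto $L^2(\mathbb R^{2n}\times o(\varphi_0),\tilde\mu\otimes\nu)$ by the scalar fact already used in Proposition \ref{propLuo}. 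Since by construction $j\times\id$ is $G$-equivariant for the action of $G$ on $\mathbb R^{2n}\times o(\varphi_0)$ defined just before Proposition \ref{corol}, the three Stratonovich-Weyl properties of $U_1$ transfer immediately to $W^{-1}$.

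For assertion (2), the plan is to apply Proposition \ref{propswdpi} to the intertwined operator: since $Bd\sigma(X)B^{-1}=d\pi(X)$, we have $U_1(d\sigma(X))(z,\varphi)=U(d\pi(X))(z,\varphi)$, and by Proposition \ref{propLuobis}
\begin{equation*}W^{-1}(d\sigma(X))(p,q,\varphi)=U(d\pi(X))(j(p,q),\varphi).\end{equation*}
Substituting $z=j(p,q)$ in the explicit formula given by Proposition \ref{propswdpi} then produces the stated expression verbatim.

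The main obstacle I expect lies in verifying carefully that the $G$-action on $\mathbb R^{2n}\times o(\varphi_0)$ introduced just before Proposition \ref{corol} is well-defined, that $\tilde\mu\otimes\nu$ is $G$-invariant under it, and that $j\times\id$ intertwines it with the action on $\mathbb C^n\times o(\varphi_0)$ used in Section \ref{sec:swcb}: the $G_0$-part reduces to the translation already treated in Section \ref{sec:Heisen}, but the $K$-part acts on $\mathbb R^{2n}$ through the conjugated map $j^{-1}\circ k\circ j$, whose preservation of $\tilde\mu$ requires a short computation using $\vert\det_{\mathbb R}k\vert=1$ for $k\in K\subset U(n)$. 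Once this compatibility is settled, the remaining arguments amount to routine bookkeeping of the tensor factorizations already available from Section \ref{sec:swcb}.
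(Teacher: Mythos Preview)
Your proposal is correct and follows essentially the same route as the paper: transfer the Stratonovich-Weyl properties of $U$ first through the unitary conjugation $A\mapsto BAB^{-1}$ (passing from $\pi$ to $\sigma$ and from $U$ to $U_1$), then through $J\otimes I_{Sy(o(\varphi_0))}$ via Proposition \ref{propLuobis}, using that the $G$-action on $\mathbb R^{2n}\times o(\varphi_0)$ is \emph{defined} so as to make $j\times\id$ equivariant; part (2) is likewise obtained by evaluating Proposition \ref{propswdpi} at $z=j(p,q)$. Your added remark about checking $G$-invariance of $\tilde\mu\otimes\nu$ under the $K$-part (via $\vert\det_{\mathbb R}k\vert=1$) is a point the paper leaves implicit under ``can be easily verified''.
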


\begin{proof} (1) For each $g=((z_0,{\bar z_0}),c_0,k)\in G$ let us denote by $L_g$ the operator of $L^2({\mathbb C}^n\times o(\varphi_0), \mu_{\lambda}\otimes \nu)$ defined by 
\begin{equation*}(L_gF)(z,\varphi)=F(g\cdot z,\Ad^{\ast}(k)\varphi).\end{equation*}
Then the covariance property for $U$ can be rewritten as
\begin{equation*}L_gU(A)=U(\pi(g)^{-1}A\pi(g))\end{equation*}
for each $g\in G$ and $A\in {\mathcal L}_2(\mathcal F)$. This gives the following covariance property for $U_1$:
\begin{equation*}L_gU_1(A)=U_1(\sigma(g)^{-1}A\sigma(g))\end{equation*}
for each $g\in G$ and $A\in {\mathcal L}_2(L^2({\mathbb R}^n,V))$. But by Proposition \ref{propLuobis} we have
 $U_1=(J^{-1}\otimes I_{Sy(o({\varphi_0}))})W^{-1}$. Thus we get 
\begin{equation*}(J\otimes I_{Sy(o({\varphi_0}))})L_g(J^{-1}\otimes I_{Sy(o({\varphi_0}))})W^{-1}(A)=W^{-1}(
\sigma(g)^{-1}A\sigma(g)) \end{equation*} for each $g\in G$ and $A\in {\mathcal L}_2(L^2({\mathbb R}^n,V))$.

Now let \begin{equation*}({\tilde L}_gf)(p,q,\varphi):=f(j^{-1}(g\cdot j(p,q)), \Ad^{\ast}(k)\varphi) \end{equation*}
for each $g=((z_0,{\bar z_0}),c_0,k)\in G$ and $ (p,q,\varphi)\in \mathbb R^{2n} \times o(\varphi_0)$. Since it is clear that for each $g\in G$ we have 
\begin{equation*}{\tilde L}_g=(J\otimes I_{Sy(o({\varphi_0}))})L_g(J^{-1}\otimes I_{Sy(o({\varphi_0}))}),\end{equation*}
we see that 
\begin{equation*}{\tilde L}_gW^{-1}(A)=W^{-1}(
\sigma(g)^{-1}A\sigma(g)) \end{equation*} for each $g\in G$ and $A\in {\mathcal L}_2(L^2({\mathbb R}^n,V))$.
Hence $W^{-1}$ is $G$-covariant. The other properties of a Stratonovich-Weyl correspondence can be easily verified.

(2) For each $X\in {\mathfrak g}^c$, we have
\begin{equation*}U(d\pi(X))=U_1(d\sigma(X))=((J^{-1}\otimes I_{Sy(o({\varphi_0}))})W^{-1}(d\sigma(X))
\end{equation*} hence the result follows from Proposition \ref{propswdpi}.
\end{proof}

Finally, we can obtain Stratonovich-Weyl correspondences for $(G, \pi,{\mathcal
O}({\varphi^0}))$ and for $(G,
\sigma,{\mathcal O}({\varphi^0}))$ by transferring $U$ and $W^{-1}$ 
by means of $\Phi$. Let $$\Psi:=\Phi \circ (j\otimes 1): {\mathbb R}^{2n}\times o(\varphi_0)\rightarrow {\mathcal O}({\varphi^0})$$ and let $\tilde{\nu}$ be the $G$-invariant measure on ${\mathcal O}({\varphi^0})$ defined by
$\tilde {\nu}:=(\Phi^{-1})^{\ast}(\mu_{\lambda}\otimes \nu)=(\Psi^{-1})^{\ast}({\tilde
\mu}\otimes \nu)$. Consider also the unitary maps 
$\tau_{\Phi}:F\rightarrow F\circ
\Phi^{-1}$ from $L^2({\mathbb C}^n\times o(\varphi_0) ,\mu_{\lambda}\otimes \nu)$ onto
$L^2({\mathcal O}({\varphi^0}),\tilde {\nu})$ and
$\tau_{\Psi}:F\rightarrow F\circ \Psi^{-1}$ from
$L^2({\mathbb R}^{2n}\times o(\varphi_0),{\tilde \mu}\otimes \nu)$ onto $L^2({\mathcal
O}(\varphi^0),\tilde {\nu})$. Then we have the following proposition.

\begin{proposition} \label{finalswc} The map ${\mathcal W}'_1:=\tau_{\Psi}W^{-1}$ is a
Stratonovich-Weyl correspondence for $(G, \sigma,{\mathcal
O}(\varphi^0))$, the map ${\mathcal W}'_2:=\tau_{\Phi}U$
is a Stratonovich-Weyl correspondence for $(G,
\pi,{\mathcal O}(\varphi^0))$ and we have ${\mathcal
W}'_1={\mathcal W}'_2I_B$.
\end{proposition}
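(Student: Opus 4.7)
The plan is to reduce everything to Propositions \ref{UisSW} and \ref{corol} by transferring the two existing Stratonovich-Weyl correspondences $U$ and $W^{-1}$ along the diffeomorphisms $\Phi$ and $\Psi$. The key preliminary I would need is the $G$-equivariance of $\Phi$ between the $G$-action $g\cdot(z,\varphi)=(g\cdot z,\Ad^{\ast}(k)\varphi)$ on $\mathbb{C}^n\times o(\varphi_0)$ and the coadjoint action on $\mathcal{O}(\varphi^0)$. To obtain this I would apply the covariance property of $S$ in Proposition \ref{proprigrand}(4) to $A=d\pi(X)$, use Proposition \ref{symbpi}(3) to rewrite both sides, and conclude $\langle \Phi(g\cdot(z,\varphi)),X\rangle=\langle \Ad^{\ast}(g)\Phi(z,\varphi),X\rangle$ for every $X\in\mathfrak{g}^c$. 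Equivariance of $\Psi$ then follows from $\Psi=\Phi\circ(j\otimes 1)$ and the very definition of the $G$-action on $\mathbb{R}^{2n}\times o(\varphi_0)$.

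Granted this, the first two assertions are routine transfer statements. Since $\tilde{\nu}$ is defined as the pushforward of $\mu_\lambda\otimes\nu$ along $\Phi$, the map $\tau_\Phi$ is an isometric isomorphism, so it converts traciality of $U$ into traciality of $\mathcal{W}'_2$ and automatically intertwines complex conjugation; covariance transfers because $\Phi$ is $G$-equivariant. The argument for $\mathcal{W}'_1=\tau_\Psi W^{-1}$ is word-for-word the same, with $\tilde{\mu}\otimes\nu$, $\Psi$, and Proposition \ref{corol}(1) playing the roles of $\mu_\lambda\otimes\nu$, $\Phi$, and Proposition \ref{UisSW}.

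For the last identity I would argue at the level of the intertwining maps. The relation $\Psi=\Phi\circ(j\otimes 1)$ gives $\Psi^{-1}=(j^{-1}\otimes 1)\circ \Phi^{-1}$, which translates directly into $\tau_\Phi\circ(J^{-1}\otimes I_{Sy(o(\varphi_0))})=\tau_\Psi$ on $L^2(\mathbb{R}^{2n}\times o(\varphi_0),\tilde{\mu}\otimes\nu)$. Combining this with the identity $U_1=(J^{-1}\otimes I_{Sy(o(\varphi_0))})W^{-1}$ of Proposition \ref{propLuobis} and the defining equality $U_1=U\circ I_B$, I obtain
\begin{equation*}
\mathcal{W}'_2\,I_B=\tau_\Phi\,U\,I_B=\tau_\Phi\,U_1=\tau_\Phi(J^{-1}\otimes I_{Sy(o(\varphi_0))})W^{-1}=\tau_\Psi W^{-1}=\mathcal{W}'_1,
\end{equation*}
which closes the proof. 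The only mildly delicate step is the equivariance of $\Phi$, which is not recorded explicitly earlier in the paper; I expect it to be the main, but still modest, obstacle.
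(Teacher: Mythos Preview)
Your proposal is correct and follows essentially the same route as the paper's proof: the first two assertions are obtained by transferring Propositions \ref{UisSW} and \ref{corol} along $\Phi$ and $\Psi$, and the chain of equalities you write for the third assertion is literally the one appearing in the paper, using $\tau_\Phi(J^{-1}\otimes I_{Sy(o(\varphi_0))})=\tau_\Psi$ together with Proposition \ref{propLuobis} and $U_1=U\,I_B$. The only difference is that you make explicit the $G$-equivariance of $\Phi$ (via Propositions \ref{proprigrand}(4) and \ref{symbpi}(3)), which the paper leaves implicit when it asserts that $\tilde\nu$ is $G$-invariant and that the first two assertions follow ``immediately''.
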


\begin{proof} The first and the second assertions immediately follow from Proposition \ref{UisSW} and Proposition \ref{corol}. To prove the third assertion, note that we have $\tau_{\Psi}(J\otimes  I_{Sy(o({\varphi_0}))})=\tau_{\Phi}$. Then, by Proposition \ref{propLuobis}, we can write
\begin{equation*}{\mathcal W}'_2I_B=\tau_{\Phi}UI_B=\tau_{\Phi}U_1=\tau_{\Phi}(J^{-1}\otimes  I_{Sy(o({\varphi_0}))})W^{-1}=\tau_{\Psi}W^{-1}={\mathcal W}'_1,\end{equation*} hence the result. \end{proof}

\end{document}